\newcommand{\lvt}{\left|\kern-1.35pt\left|\kern-1.3pt\left|}
\newcommand{\rvt}{\right|\kern-1.3pt\right|\kern-1.35pt\right|}
\newtheorem{thm}{Theorem}[section]
\newtheorem{cor}[thm]{Corollary}
\newtheorem{lem}[thm]{Lemma}
\newtheorem{prop}[thm]{Proposition}
\newtheorem{exam}[thm]{Example}
\newtheorem{defn}[thm]{Definition}
\theoremstyle{remark}
 \def\ve{{\varepsilon}}
 \def\d{\mathrm{d}}
 \def\sJ{{\mathsf J}}
 \def\sM{{\mathsf M}}
 \def\sS{{\mathsf S}}
 \def\a{{\alpha}}
 \def\b{{\beta}}
 \def\g{{\gamma}}
 \def\l{{\lambda}}
 \def\ve{{\varepsilon}}
 \def\xb{{\boldsymbol x}}
 \def\Fb{{\boldsymbol F}}
 \def\Pb{{\boldsymbol P}}
 \def\CB{{\mathcal B}}
 \def\CL{{\mathcal L}}
 \def\CP{{\mathcal P}}
 \def\CQ{{\mathcal Q}}
 \def\CV{{\mathcal V}}
 \def\CA{{\mathcal A}}
 \def\CC{{\mathbb C}}
 \def\FF{{\mathbb F}}
 \def\NN{{\mathbb N}}
 \def\PP{{\mathbb P}}
 \def\RR{{\mathbb R}}
      \def\Tr{{\mathsf T}}
\def\lla{\langle{\kern-2.5pt}\langle}      
\def\rra{\rangle{\kern-2.5pt}\rangle}
\newcommand{\wh}{\widehat}
\def\f{\frac}
\begin{document}
 
 \title[Continued Fractions and Orthogonal Polynomials]
 {Continued Fractions and Orthogonal Polynomials in Several Variables}

\author{Tomas~Sauer}
\address{Lehrstuhl f\"ur Mathematik mit Schwerpunkt Digitale Bildverarbeitung \& FORWISS, Universit\"t Passau, 
Innstr. 43, D-94053 Passau, Germany}
\author{Yuan~Xu}
\address{Department of Mathematics, University of Oregon, Eugene, 
OR 97403--1222, USA}
\email{yuan@uoregon.edu} 
\thanks{The second author was partially supported by Simons Foundation Grant \#849676}

\date{\today}  
\subjclass[2020]{41A21, 42C05,  65D15, 65D32}
\keywords{Orthogonal polynomials, cubature, continued fraction, several variables, moment sequence}
 
\begin{abstract} 
We extend the close interplay between continued fractions, orthogonal polynomials, and Gaussian 
quadrature rules to several variables in a special but natural setting
which we characterize 
in terms of moment sequences. The crucial condition for the
characterization is the commutativity 
of the multiplication operators on finite polynomial subspaces modulo
an ideal. Moreover, starting from the 
orthogonal polynomials or the three-term recurrence, our method
constructs a sequence of moment sequences  
that provides an approximation of the maximal order for recovering the
moment sequence that defines the orthogonality. 
\end{abstract} 

\maketitle

\section{Introduction}
\setcounter{equation}{0}

It is well-known that continued fractions are closely related to orthogonal polynomials
and both, in turn, are related to the Gaussian quadrature rules. The interrelation 
between the three is elegant and revealing. We are interested in a possible extension 
of such relations in several variables. A moment of reflection,
however, shows several obvious 
obstacles to extending from one variable to the multi-variable setting. As a start,
it is not even clear how to define a continued fraction expansion
utilizing multivariate orthogonal polynomials.  

In the literature, there exist already several generalizations of
multivariate continued fractions, especially in terms of partitions
and lattices; see, for example, \cite{Karp,Schweiger}.  
Our approach will be fundamentally different and focus on rational
approximations of formal power  
series based on polynomials defined by a three-term recurrence formula. 
This is in the spirit of Gauss' original approach when developing Gaussian quadrature 
formulas from \cite{Gauss1816}. Therefore, it is natural that our
extension will be based on the three-term recurrences in $d$  
variables, which then consist of $d$ relations, one for each
coordinate multiplication. They are known for multivariate orthogonal
polynomials. The \emph{matrix}  
coefficients of the three-term relations will be embedded into an
appropriate definition  
of continued fractions, which, however, can only give a complete
extension of the univariate case when there is an appropriate
extension of the Gaussian quadrature rules to several variables. The latter relies on 
the existence of the maximal number of \emph{common} zeros of orthogonal 
polynomials of the same degree, which may or may not exist. Thus, the ideal 
generated by all such orthogonal polynomials of the same total degree must play 
a fundamental role, which translates to special structures of the matrix coefficients 
and to the moment sequences that define the orthogonality. This approach is 
fundamentally different from the ones found in \cite{Karp,Schweiger} and therefore 
yields yet another generalization of continued fractions to several variables, a 
generalization that is natural in our context. 

Our main result shows that there is a natural and complete extension 
to several variables whose existence can be characterized in
terms of the moment sequence.

In one direction, we start with  
a moment functional $\CL$, defined via a moment sequence $\mu$ in
$\RR^d$, and assume that the moment  
functional $\CL$ is \emph{definite} so that orthogonal polynomials defined
via $\CL$ exist. These polynomials then 
satisfy three-term relations which are characteristic properties of
orthogonality by Favard's theorem. The  
three-term relations define the Jacobi matrices, which are
representations of multiplication operators by  
coordinates. Our main assumption is that the truncated Jacobi matrices
commute, which leads to the existence  
of a maximal number of common zeros of orthogonal polynomials and then
to Gaussian cubature rules. If $\CL$ is positive
definite, this is how the Gaussian cubature rules are characterized in
\cite{DX}. In the above narrative. however, we do not require anymore 
that $\CL$ is positive definite. 
This is in accordance with the well-known and classical theory of univariate
continued fractions and the fast convergence of the associated
convergents, see \cite{S_CF}, since the existence of a so-called 
\emph{associated continued fraction} is equivalent to
the definiteness. Moreover, the positivity of $\CL$ can even 
be expressed in terms of the signs of the recurrence coefficients in 
the univariate case. In several variables, it is well-known that the
issue of positivity
is significantly more complex, which is essentially due to the fact that
positive polynomials are no longer sums of squares (cf. \cite{Schmuedgen}).
In the framework that we present here, many results have to be
reconsidered and modified. For example,  
without positivity, the common zeros may not be real or simple anymore. As a result, 
it is necessary to recast many of the results on the structure of OPs in several 
variables for a functional $\CL$ that is only definite but not
necessarily positive definite, which we  
shall carry out in some detail and provide proof whenever
necessary. In return, this framework enables us to give a definition
of continued fractions that can be regarded as an  
extension from one to several variables.

In the other direction, we start with a polynomial sequence that satisfies three-term 
relations and assume that their truncated multiplication operators commute. 
By Favard's theorem, these polynomials  
are orthogonal with respect to a linear functional $\CL$ which can be regarded as a 
moment functional defined by $\mu_\a = \CL(x^\a)$. Using continued
fractions, we
can then define a sequence of moment sequences $\mu^n$ and show that
$\mu^n \to \mu$ with an optimal rate in the sense that a maximal
number of terms in $\mu^n$ coincides with those in $\mu$;
in other words, we can efficiently recover the moment sequence that
determines the orthogonality.  
The main requirement for this setup lies in the commutativity of the
truncated Jacobi matrices, which can in turn be reduced to a
restriction on the moments. 

The paper is organized as follows. In the next section, we recall the
classical results of one variable,
which serves as a roadmap for our extension to several variables. In
the third section,  
we provide a detailed account of orthogonal polynomials in several variables and 
Gaussian cubature rules, where the results are presented and proved when necessary,  
without assuming the positive definiteness. In the fourth section, the \emph{multivariate} 
continued fraction expansion is defined and used to recover the moment sequence and 
we provide a characterization of the moments for which such an extension is possible.
 
\section{Preliminary}
\setcounter{equation}{0}
We recall results in one variable in this section. To that end, let $\CL$ be a
linear moment functional, defined at least on the polynomials, and
let $\mu_n = \CL(x^n)$ , $n=0,1,2,\ldots$, denote the moments of
$\CL$. In particular, if $w$ is a (not necessarily nonnegative) weight
function on $\RR$, then 
$$
  \CL (f) = \int_{\RR} f(x) w(x) \d x
$$
is such a moment functional. We assume that all $\mu_n$ are finite and
call $\CL$, or $w$,  
{\it definite} if the Hankel matrices  
$$
M_n : =\left[\mu_{k+j}: \begin{matrix} 0 \le k \le n \\ 0\le  j\le
    n \end{matrix}\right] 
$$ 
are nonsingular for all $n =0,1,\ldots$, and {\it positive definite} if, in addition, the matrices 
are positive definite. If $w \neq 0$ is nonnegative, then the Hankel
matrices are positive definite  
for all $n$, so that $\CL(f)$ is positive definite. If $w$ changes
sign on its domain, however,  
the determinant of $M_n$ could be zero for some $n$, in which case
orthogonal polynomials could  
degenerate in their degrees; see, for example, \cite{Mo, SG}. Our
definition of definiteness avoids these degenerate cases.  

A real polynomial $p_n$ of degree $n$ is called orthogonal with
respect to $\CL$ if  
$\CL(p_n q) = 0$ for all $q$ of degree at most $n-1$. It is known that
if $\CL$ is definite, then  
the orthogonal polynomials $\{p_n\}_{n\ge 0}$ exist and they satisfy a
three-term relation 
\begin{equation}
  \label{eq:recurrence-1d}
  p_{n+1} (x) = \left( a_n x + b_n \right) p_n (x) + c_n \, p_{n-1}(x), \qquad a_n c_n \ne 0, \qquad n \ge 0.
\end{equation}
Moreover, this relation characterizes the orthogonality according to Favard's theorem. For $\CL$ definite, 
the polynomial $p_n$ has $n$ zeros counting multiplicity. If $\CL$ is
positive definite, the zeros are all real and simple. If $\CL$ is  
definite but not positive definite, the zeros can be complex and
appear in conjugate pairs; that is, if $z$ is a zero of $p_n$ then so
is $\bar z$.  

When $w$ is positive definite, the zeros of $p_n$, $\{x_k: 1 \le k \le n\}$, are nodes of 
the Gaussian quadrature rule, 
\begin{equation} \label{eq:Gauss-quad}
   \int_{\RR} f(x) w(x) \d x = \sum_{k=1}^n  \l_k f(x_k), \qquad \deg f \le 2n-1,
\end{equation}
where $\l_k >0$. This remains true if $\CL$ is definite but not positive definite, and $p_n$ and $p_{n+1}$ 
do not have common zeros, for which $\l_k \ne 0$ for $1 \le k \le n$. The quadrature rule
\eqref{eq:Gauss-quad} has been studied extensively. For signed weight function, it holds for all even 
$n$ if $w$ is a weight function on $[-1,1]$ and $\det \left[\mu_{2(k+j)-1}\right]_{k,j =1}^n \ne 0$, for 
example; see \cite[Section 3.1]{Gau}. Another interesting case is the oscillatory weight functions. Let 
$p_n(w)$ denote the orthogonal polynomial of degree $n$ with respect to $w$. The oscillatory weight 
function $\wh w$ is defined by 
$$
  \wh w(t)  = p_m (w; t) w(t), \qquad t \in \RR. 
$$
In this case, the moment $\mu_k (\wh w) = 0$ if $k < m$ and $\mu_m(\wh w) >0$, so that the $m+1$ Hankel
matrix satisfies $\det M_{m+1} > 0$. In particular, the polynomial $p_n(\wh w)$, $n = m+1$, is well defined. Moreover,
it has all real and simple zeros in some cases; for example, when $w(t) = (1-t^2)^{\l-\f12}$ and $0 < \l \le 2$ (see,
for example, \cite{Gau, No}).

We are mostly interested in the case when $w$ is definite and $p_n(w)$ has distinct real simple zeros, which
holds whenever $\CL$ is positive definite. Below is an example when $\CL$ is definite but not positive definite. 

\begin{exam}
For $\l > -\f12$ and $\mu > 0$, we define the weight function 
$$
  w_{\l,\mu} (t) = t^{2\mu+1} (1-t) (1-t^2)^{\l-\f12}, \qquad -1 < t < 1.
$$ 
Then $w_{\l,\mu}$ changes sign on $(-1,1)$. The orthogonal polynomial $p_n(w_{\l,\mu})$ has $n$ simple,
real zeros in $[-1,1]$ and they are nodes of the Gaussian quadrature of degree $2n-1$.
\end{exam}

Indeed, let $C_n^{(\l,\mu)}$ be the generalized Gegenbauer polynomials that are orthogonal with respect to 
the weight function $|t|^{2\mu} (1-t^2)^{\l-\f12}$ on $[-1,1]$. These polynomials can be given explicitly in
terms of the Jacobi polynomials $P_n^{(\a,\b)}$ (cf. \cite[Section 1.5.2]{DX}). In particular, 
up to a multiple constant, 
\begin{align*}
C_{2n}^{(\lambda ,\mu )}(t) = 
   b_n P_{n}^{(\lambda -\f12,\mu-\f12)}(2t^{2}-1).
\end{align*}
 Then it can be easily verified (cf. \cite{ABC}) that 
$$
  p_{2n}(w_{\l,\mu}; t) = C_{2n}^{(\l,\mu+\f12)}(t) \quad \hbox{and}\quad 
   p_{2n+1}(w_{\l,\mu}; t) = (1+t) C_{2n}^{(\l+1,\mu+\f12)}(t) 
$$
for $n =0,1, 2,\ldots$. Moreover, it is easy to see that, if $\mu$ is an integer, then
$$
  \int_{-1}^1 \left| p_{2n}(w_{\l,\mu}; t)\right |^2 w_{\l,\mu}(t) \d t = - \int_{-1}^1 \left| C_{2n}^{(\l,\mu)} \right| |t|^{2\mu+2}
    (1-t^2)^{\l-\f12} \d t < 0. 
$$
The properties of the zeros of $p_n(w_{\l,\mu})$ follow from those of the zeros of the Jacobi polynomials. In particular,
it follows that $w_{\l,\mu}$ admits Gaussian quadrature rules of degree $2n-1$ for all $n$. 

The recurrence relation relates orthogonal polynomials closely to continued
fractions, a fact used by Gauss to obtain the quadrature rules in
\cite{Gauss1816}. Indeed, defining the (infinite) continued fraction
as
$$
r(x) = \frac{b_1 |}{| a_1 (x)} + \frac{b_2 |}{| a_2 (x)} + \frac{b_3
  |}{| a_3 (x)}\cdots 
= \frac{b_1}{a_1 (x) + \dfrac{b_2}{a_2 (x) + \dfrac{b_3}{\ddots}}}.
$$
Its \emph{convergents} of order $n$, defined by
$$
r_n (x) = \frac{b_1 |}{| a_1 (x)} + \frac{b_2 |}{| a_2 (x)} + \dots + \frac{b_n
  |}{| a_n (x)} = \frac{p_n (x)}{q_n (x)},
$$
are rational functions, where the numerator $p_n$ and denominator $q_n$ are 
polynomials and they satisfy a recurrence relation
\begin{align*}
  \label{eq:CFrecurrence}
  p_n (x) &= a_n (x) p_{n-1} (x) + b_n p_{n-2}, \qquad p_{-1} = 1, \,
            p_0 = 0, \\
  q_n (x) &= a_n (x) p_{n-1} (x) + b_n p_{n-2}, \qquad q_{-1} = 0, \,
            q_0 = 1.
\end{align*}
If the polynomials $a_n$ are \emph{affine polynomials}, i.e., $a_n (x) =
\alpha_n x + \beta_n$, then the above recurrence is the classical
\emph{three-term recurrence relation} for orthogonal polynomials. In
fact, the polynomials
$q_n$ \emph{are} orthogonal polynomials provided that $\alpha_n \, c_n
< 0$.

A classical result in continued fractions says that a formal Laurent
series
$$
\mu (z) = \sum_{n=1}^\infty \mu_{n-1} z^{-n}
$$
has an \emph{associated} continued fraction expansion, i.e., $\mu (z) - r_n
(z) = O \left( z^{2n} \right)$, if and only if
$$
\det \left[ \mu_{j+k} :
  \begin{array}{c}
    j=0,\dots,n \\ k=0,\dots,n
  \end{array}
\right] \neq 0, \qquad n \in \NN_0.
$$
If $\mu$ is the moment sequence associated with a definite linear
functional, then the denominator polynomials $q_n$ are the respective
orthogonal polynomials and the maximal exactness of Gaussian quadrature
is eventually a consequence of the fact that the Laurent series for
the quadrature moments is $p_n/q_n$. This is the central idea in
Gauss' original paper \cite{Gauss1816}.

Continued fraction expansions are closely related to \emph{Prony's problem} 
which consists of recovering a multi-exponential function
$$
f(x) = \sum_{k=1}^m f_k \, \zeta_k^x, \qquad \zeta_k \in \CC \setminus \{ 0 \},
$$
from integer samples $f(n)$, $n \in \NN$. In fact, interpreting these
samples of $f$ as the 
moment sequence $\mu_n = f(n)$, the central connection is that the infinite
\emph{Hankel operator} $
\begin{bmatrix}
  \mu_{j+k} : j,k \in \NN_0 
\end{bmatrix}$ has finite rank in this case and, by Kronecker's
Theorem, the associated Laurent series $\mu (z)$ is a rational
function, hence has a \emph{finite} continued fraction
expansion. Moreover, the denominator of the rational function is the
so-called \emph{Prony polynomial} whose zeros are exactly 
$\zeta_1,\dots,\zeta_m$. 
The monograph \cite{S_CF} provides more details on the connection
between continued fractions and Prony's problem, and how the continued
fraction expansion and the construction of the associated recurrence
coefficients can be used to solve Prony's problem even numerically.

\section{Orthogonal polynomials and cubature rules}
\setcounter{equation}{0}

We now consider OPs in several variables with respect to a linear
functional $\CL$ or, equivalently, a moment sequence $\mu$.
As mentioned in the introduction, we shall need results on the structures of these
polynomials for $\CL$ being definite, but not necessarily positive definite. This 
requires some modifications of the existing theory that has been
established for the positive definite case;
most of these modifications are straightforward but by no means all of
them. For example, the proof for the existence of a maximal number of
common zeros requires additional algebraic tools. For the record and
the reader's convenience, we 
will point out the modifications and provide proof whenever necessary. 

Let $\CL$ be a linear functional. For $\a \in \NN_0^d$, let 
$\mu_\a = \CL(x^\a)$ denote the $\a$th moment of $\CL$. We assume for
simplicity that $\mu_0
= \CL(1) \ne 0$ and that all moments $\mu_\a$ are real-valued.
Let $\Pi_n^d$ denote the space of polynomials of total degree at most $n$ in $d$ 
variables and let $\CP_n^d$ denote the space of homogeneous
polynomials of degree $n$. Then 
$$
r_n := \dim \Pi_n^d = \binom{n+d}{d} \quad \hbox{and} \quad r_n^0 := \dim \CP_n^d = \binom{n+d-1}{d-1}.
$$
For $P \in \Pi_n^d$, we write $\wh P \in \RR^{r_n}$ for the
coefficient vector so that $P(x) = \wh P^\Tr \xb^n = \sum_\alpha \wh
P_\alpha x^\alpha$, 
where $\xb^n := \left[ x^\alpha : |\alpha| \le n \right]$. We arrange
$\wh P$ in a graded fashion as
$$
\wh P = \left[
  \begin{array}{c}
    \wh P_0 \\ \vdots \\ \wh P_n
  \end{array}
\right], \qquad \wh P_j := \left[ \wh P_\alpha : |\alpha| = j \right],
\quad j=0,\dots,n,
$$
and decompose $\xb^n$ accordingly. The moment sequence $\mu$ defines a
linear functional $\CL$ on $\Pi$ by linearity as
$$
\CL (P) = \sum_{|\alpha| \le n} p_\alpha \mu_\alpha, \qquad
P(x) = \sum_{|\alpha| \le n} p_\alpha x^\alpha \in \Pi_n^d.
$$
A polynomial $P \in \Pi_n^d$ is called orthogonal with respect to the linear functional $\CL$ if 
$$
   \CL (P Q) = 0, \qquad \forall Q \in \Pi_{n-1}^d. 
$$
In the usual setting of OPs in several variables, we assume that $\CL$ is (square) positive definite, which requires 
$\CL(p^2) > 0$ for all non-zero polynomials. This ensures the existence of an orthonormal basis of polynomials. 
In this section, we state the structure of OPs in the general setting, which are known in the positive definite case, 
but require some modification if $\CL$ is definite but not necessarily positive definite. 

\subsection{Structure of orthogonal polynomials}
For $n=0,1,2,\ldots$, let 
$$
   M_n = \left[\mu_{\a+\b}: \begin{matrix} |\a|\le n \\ |\b|\le n \end{matrix} \right]
$$
be the moment matrix of size $r_n \times r_n$. Clearly,
$$
\CL (PQ) = \wh P^\Tr M_n \wh Q, \qquad P,Q \in \Pi_n^d.
$$
We assume that the matrices $M_n$, $n=0,1,2,\ldots$, are 
non-singular, which ensures the existence of orthogonal polynomials. Indeed, the usual construction of OPs 
from the moments (cf. \cite[Section 3.3.2]{DX}) applies, which gives a monic OP of the form
$$
   P_\a(x) = x^\a + q_\a, \qquad q_\a \in \Pi_{n-1}^d, 
$$ 
for each $\a \in \NN_0^d$. Let $\CV_n^d$ be the space of OPs of degree $n$. Then $\dim \CV_n^d = r_n^0$. 
Let $\wh \PP_n$ be the column vector 
$$
\wh \PP_n(x) = \left[ P_\a: |\a| = n \right] = \wh \Pb_n^\Tr \xb^n,
\qquad \wh \Pb_n := \left[ \hat P_\a: |\a| = n \right] \in \RR^{r_n
  \times r_n^0}.
$$
Then $\wh \PP_n$ consists of a basis of $\CV_n^d$. Partitioning $M_n$
as
$$
M_n = \left[
  \begin{array}{cc}
    M_{n-1} & M_{n-1,n,} \\
    M_{n-1,n}^\Tr & M_{n,n}
  \end{array}
\right],
$$
with
$$
M_{n-1,n} = \left[ \mu_{\alpha+\beta} :
  \begin{array}{c}
    |\alpha| \le {n-1} \\ |\beta| = n
  \end{array}
\right] \quad \hbox{and}\quad M_{n,n} = \left[ \mu_{\alpha+\beta} :
  \begin{array}{c}
    |\alpha| = n \\ |\beta| = n
  \end{array}
\right],
$$
it can easily be verified that
$$
\wh \Pb_n = \left[
  \begin{array}{c}
    -M_{n-1}^{-1} M_{n-1,n} \\ I_{r_n^0 \times r_n^0}
  \end{array}
\right],
$$
and that the matrix
$$
\CL(\wh \PP_n \wh \PP_n^\Tr) = \wh \Pb_n^\Tr M_n \wh \Pb_n =
M_{n,n} - M_{n-1,n}^\Tr M_{n-1}^{-\Tr} M_{n-1,n}
$$
is the symmetric and non-singular Schur complement of $M_{n,n}$ in
$M_n$. This matrix satisfies a decomposition 
$$
\CL(\wh \PP_n \wh \PP_n^\Tr) = Q^\Tr \Lambda Q, \qquad \Lambda = |\Lambda|^{\f12} S_n |\Lambda|^{\f12},
$$
where $Q$ is an orthogonal matrix and $\Lambda = \mathrm{diag} \{\l_1,\ldots, \l_{r_n^0})$ is a non-singular 
diagonal matrix, which is further decomposed by using 
$|\Lambda|^{\f12} = \mathrm{diag}\{|\l_1|^{\f12},\ldots,
|\l_{r_n^d}|^{\f12})$ and a \emph{signature matrix} $S_n \in
\RR^{r_n^0 \times r_n^0}$, the latter being a diagonal matrix with the
diagonal elements being either $1$ or $-1$. We now define
$$
    \PP_n =  |\Lambda|^{- \f12} Q \wh \PP_n, \qquad n =0,1,2,\ldots. 
$$
Then $\PP_n$ consists of a basis of orthogonal polynomials of degree $n$, so that 
$$
  \CL (\PP_n \PP_m^\Tr) = \delta_{n,m} S_n, \qquad n \ne m,
$$
which we call a \emph{sign-orthonormal} basis. If $\CL$ is positive
definite, then $S_n$ is the identity matrix,  
and the sign-orthonormal basis becomes an orthonormal basis. 

\subsection{Recurrences}
\label{sec:recurrences}

Orthogonal polynomials are described by a three-term recurrence
relation as follows.

\begin{prop}\label{prop:SignOrthognal}
Let $\PP_n$ be a sign-orthonormal basis of $\CV_n^d$, $n=0,1,2,\ldots$
and $\PP_{-1} :=0$. Then there exist unique matrices $A_{n, i}:
r_n^0\times r^0_{n+1}$ and $B_{n,i}: r_n^0\times r_n^0$, such that
\begin{equation} \label{eq:3term}
   x_i S_n \PP_n(x) = A_{n,i} \PP_{n+1}(x) + B_{n,i} \PP_n(x) +
   A_{n-1,i}^\Tr \PP_{n-1}(x),  \qquad 1 \le i \le d, 
\end{equation}
where the matrix $B_n$ is symmetric and 
\begin{equation} \label{eq:rank-cond}
   \mathrm{rank}\, A_{n,i}  = r_n^0 \quad \hbox{and}\quad 
   \mathrm{rank}\, (A_{n,1}^\Tr, \ldots, A_{n,d}^\Tr)^\Tr = r_{n+1}^0,  
\end{equation}
\end{prop}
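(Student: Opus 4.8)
The plan is to derive the three-term relation from the standard fact that $x_i\PP_n$ lies in $\CV_{n+1}^d\oplus\CV_n^d\oplus\CV_{n-1}^d$ and then to keep careful track of the signature matrices. First I would note that each component $x_iP$ for $P\in\CV_n^d$ is a polynomial of degree $n+1$, so it can be expanded in the sign-orthonormal basis $\PP_0,\dots,\PP_{n+1}$; since $\CL$ is definite and the $M_k$ are nonsingular, the expansion coefficients are obtained by pairing against the basis. Because $\CL(\PP_k\PP_m^\Tr)=\delta_{k,m}S_k$, the coefficient of $\PP_k$ in $x_iP$ is $\CL(x_iP\,\PP_k^\Tr)S_k$. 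The key orthogonality observation is that $\CL(x_iP\,\PP_k^\Tr)=0$ whenever $k\le n-2$, because $x_i\PP_k$ then has degree $\le n-1$ and $P$ is orthogonal to $\Pi_{n-1}^d$; similarly pairing in the other order kills the $\PP_{k}$-terms for $k\ge n+2$. This already forces the expansion of $x_i\PP_n$ to involve only $\PP_{n+1},\PP_n,\PP_{n-1}$, giving \eqref{eq:3term} with $A_{n,i}:=\CL(x_iS_n\PP_n\,\PP_{n+1}^\Tr)S_{n+1}$, $B_{n,i}:=\CL(x_iS_n\PP_n\,\PP_n^\Tr)S_n$, and the lower coefficient $C_{n,i}:=\CL(x_iS_n\PP_n\,\PP_{n-1}^\Tr)S_{n-1}$. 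Uniqueness is immediate from sign-orthonormality: pairing \eqref{eq:3term} against $\PP_{n+1}^\Tr$, $\PP_n^\Tr$, $\PP_{n-1}^\Tr$ recovers the coefficient matrices.

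Next I would identify $C_{n,i}$ with $A_{n-1,i}^\Tr$. Apply $\CL$ to the product of \eqref{eq:3term} (at level $n$) with $\PP_{n-1}^\Tr$ and, separately, apply $\CL$ to the product of \eqref{eq:3term} written at level $n-1$ with $\PP_n^\Tr$; using $\CL(\PP_k\PP_m^\Tr)=\delta_{k,m}S_k$ and the symmetry of $\CL$ (i.e. $\CL(PQ^\Tr)=\CL(QP^\Tr)^\Tr$ applied entrywise), one gets $C_{n,i}S_{n-1}=S_{n-1}A_{n-1,i}^\Tr$ up to the signature bookkeeping; a short computation matching the $S_k$ factors built into the definitions shows $C_{n,i}=A_{n-1,i}^\Tr$ exactly, which is why the relation is stated with $A_{n-1,i}^\Tr$ rather than an independent matrix. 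The symmetry of $B_{n,i}$ follows the same way: $B_{n,i}=\CL(x_iS_n\PP_n\,\PP_n^\Tr)S_n$ and since $x_iS_n$ is a scalar multiple on each coordinate and $\CL(\PP_n\PP_n^\Tr)=S_n$, one checks $B_{n,i}^\Tr=S_n\CL(x_i\PP_n(S_n\PP_n)^\Tr)=B_{n,i}$ after inserting $S_n^2=I$.

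Finally I would establish the rank conditions \eqref{eq:rank-cond}, which is the part I expect to be the main obstacle, since in the merely definite (not positive definite) case one cannot argue via positivity of inner products. The first claim, $\mathrm{rank}\,A_{n,i}=r_n^0$, should come from the fact that the leading coefficient of $x_i\PP_n$ relative to the graded structure is nondegenerate: writing $\PP_n=G_n\xb^n+(\text{lower})$ with $G_n\in\RR^{r_n^0\times r_n^0}$ invertible (invertibility because the $M_k$ are nonsingular and $\PP_n$ is obtained from the monic basis by the invertible transformation $|\Lambda|^{-1/2}Q$), the top-degree part of $x_iS_n\PP_n$ is $S_nG_n L_{n,i}\xb^{n+1}$ where $L_{n,i}$ is the (full row rank $r_n^0$) matrix representing multiplication-by-$x_i$ from degree-$n$ to degree-$(n+1)$ monomials; matching top-degree parts in \eqref{eq:3term} gives $S_nG_nL_{n,i}=A_{n,i}G_{n+1}$, and since $G_n,G_{n+1}$ are invertible and $L_{n,i}$ has rank $r_n^0$, so does $A_{n,i}$. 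For the second claim, stack the $d$ relations: the combined top-degree identity reads $\mathrm{diag}(S_nG_n)\,\mathbf L_n=\mathbf A_n G_{n+1}$ where $\mathbf L_n=(L_{n,1}^\Tr,\dots,L_{n,d}^\Tr)^\Tr$ and $\mathbf A_n=(A_{n,1}^\Tr,\dots,A_{n,d}^\Tr)^\Tr$; the classical fact that the monomials of degree $n+1$ are exactly spanned by $\{x_i x^\alpha:|\alpha|=n\}$ means $\mathbf L_n$ has full column rank $r_{n+1}^0$, and then invertibility of $G_{n+1}$ forces $\mathrm{rank}\,\mathbf A_n=r_{n+1}^0$. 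The one point requiring care is that passing from the monic basis to the sign-orthonormal basis only changes $G_n$ by an invertible factor, so no rank is lost; I would spell that out but expect it to be routine given the construction of $\PP_n$ in the previous subsection.
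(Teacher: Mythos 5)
Your proposal is correct and follows essentially the same route as the paper: expand $x_iS_n\PP_n$ in the sign-orthonormal basis, kill the low-degree terms by orthogonality, read off $A_{n,i}$, $B_{n,i}$, $C_{n,i}$ from $\CL$-pairings (whence symmetry of $B_{n,i}$ and $C_{n,i}=A_{n-1,i}^\Tr$), and obtain the rank conditions by comparing leading coefficients with the shift matrices $L_{n,i}$. The only difference is that you spell out the second rank condition via the stacked matrix $\mathbf L_n$, which the paper leaves implicit as "as in the positive definite case"; your argument there is the standard one and is fine.
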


Indeed, since the
vectors $\PP_n$, $n \ge 0$, are a basis of $\Pi^d$, there must exist
matrices $A_{n,i}, B_{n,i}, C_{n,i}$ of proper size such that
$$
x_i S_n \PP_n(x) = A_{n,i} \PP_{n+1}(x) + B_{n,i} \PP_n(x) +
   C_{n,i} \PP_{n-1}(x),
$$
and using the
orthogonality, it is easy to see that  
\begin{equation}
  \label{eq:AicLFormula}
  A_{n,i} S_{n+1} = S_n \CL (x_i \PP_n \PP_{n+1}^\Tr), \qquad B_{n,i}
  S_n = S_n \CL (x_i \PP_n \PP_n^\Tr),   
\end{equation}
which shows that $B_n$ is symmetric since the signature matrix $S_n$
satisfies $S_n^\Tr = S_n = S_n^{-1}$; moreover, using
\eqref{eq:AicLFormula} with $n$ replaced by $n-1$  
in the second equation, we obtain
$$
     C_{n,i} S_{n-1} = S_n \CL (x_i \PP_n \PP_{n-1}^\Tr) = S_n (S_n A_{n-1,i}^\Tr S_{n-1}) = A_{n-1,i}^\Tr S_{n-1},
$$
which implies that $C_{n,i} = A_{n-1,i}^\Tr$. 

The rank conditions can be established as in the positive definite case: for example, since $\PP_n$ is 
sign-orthonormal, its leading coefficient matrix $\wh \Pb_n$, defined by 
$$
   \PP_n(x) = \wh \Pb_n^\Tr \xb^n_0 + \ldots, \qquad \xb^n_0 := \left[x^\a:
     |\a| =n \right], 
$$
is a non-singular matrix of size $r_n^d$. Comparing the leading coefficients of \eqref{eq:3term} gives
$$
  S_n \wh \Pb_n L_{n,i} = A_{n,i} \wh \Pb_{n+1}, 
$$
where $L_{n,i}$ is the shift matrix of dimension $r_n^0 \times
r_{n+1}^0$, defined by $L_{n,i} \xb^{n+1}_0 = x_i \xb^n_0$; this matrix
has full rank, thus $A_{n,i}$ has rank $r_n^0$. 

The coefficient matrices of the three-term relation for
sign-orthonormal polynomials satisfy a set of commutativity  
relations: for $1 \le i, j \le d$ and $k \ge 0$, 
\begin{align} \label{eq:comm}
 \begin{split}
   A_{n,i} S_{n+1} A_{n+1,j} &= A_{n,j} S_{n+1} A_{n+1,i},   \\  
   A_{n,i} S_{n+1} B_{n+1,j} + B_{n,i} S_n A_{n,j} &= B_{n,j} S_n A_{n,i}
   + A_{n,j} S_n B_{n+1,i},  \\
  A_{n-1,i}^\Tr S_{n-1} A_{n-1,j}  + B_{n,i} S_n B_{n,j} + &  A_{n,i}
  S_{n+1} A_{n,j}^\Tr    \\
  = A_{n-1,j}^\Tr S_{n-1} A_{n-1,i}  &+  B_{n,j} S_n B_{n,i} + A_{n,j}
  S_{n+1} A_{n,i}^\Tr,   
\end{split}
\end{align}
where $A_{-1,i} : =0$. This follows from computing $\CL(x_i x_j  \PP_n \PP_{n \pm 1}^\Tr)$ 
via the three-term relation in two different ways. For example, 
\begin{align*}
  S_n \CL(x_i x_j \PP_n \PP_{n+1}^\Tr) S_{n+1}^\Tr
  & =  \CL(x_i S_n \PP_n (x_j S_{n+1} \PP_{n+1})^\Tr \\  
  &  =  A_{n,i} \CL(\PP_{n+1} \PP_{n+1}^\Tr) A_{n+1,j}) = A_{n,i} S_{n+1} A_{n+1,j},
\end{align*}
and the left-hand side is unchanged if the order of $i$ and $j$ is
exchanged, which shows the first identity; the others are proved in
exactly the same way.

In fact, for \emph{any} set of basis functions $\FF_n \in \left(
  \Pi_n^d \right)^{r_n^0}$ and the multiplication rule
\begin{equation}
  \label{eq:MultiplArbit}
  x_i \FF_n (x) = A_{n,i} \FF_{n+1} (x) + B_{n,i} \FF_n (x) + C_{n,i}
  \FF_{n-1}, \qquad n \in \NN_0, \, 1 \le i \le d,
\end{equation}
the consistency conditions $x_i x_j \FF_{n-1} = x_j x_i \FF_{n-1}$
implies the identities
\begin{align}
  \label{eq:CommMultGen}
  \begin{split}
      A_{n-1,i} A_{n,j} & =  A_{n-1,j} A_{n,i}, \\
      A_{n-1,i} B_{n,j} + B_{n-1,i} A_{n-1,j}
      & = A_{n-1,j} B_{n,i} + B_{n-1,j} A_{n-1,i}, \\
      A_{n-1,i} C_{n,j} + B_{n-1,i} B_{n-1,j} +& C_{n-1,i} A_{n-2,j} \\
      = A_{n-1,j} C_{n,i} & + B_{n-1,j} B_{n-1,i} + C_{n-1,j} A_{n-2,i} \\
      B_{n-1,i} C_{n-1,j} + C_{n-1,i} B_{n-2,j},
      & = B_{n-1,j} C_{n-1,i} + C_{n-1,j} B_{n-2,i},\\
      C_{n-1,i} C_{n-2,j} & = C_{n-1,j} C_{n-2,i},  
  \end{split}
\end{align}
which reduces to \eqref{eq:comm} for $\FF_n = \PP_n$. Note that the
conditions \eqref{eq:CommMultGen} and \eqref{eq:comm}, respectively,
become trivial for $d=1$ and thus 
are a purely multivariate phenomenon, i.e., are relevant for $d \ge 2$
only.

We can use the matrices occurring in the three-term recurrence
\eqref{eq:3term} to define a family of tridiagonal infinite 
matrices $\sJ_i$, $1 \le i \le d$, as follows:
\begin{equation} \label{Jmatrix}
\sJ_i := \left[ \begin{matrix} B_{0,i}&A_{0,i}&&\bigcirc\cr
A_{0,i}^\Tr &B_{1,i}&A_{1,i}&&\cr
&A_{1,i}^\Tr &B_{2,i}&\ddots\cr
\bigcirc&&\ddots&\ddots \end{matrix}
\right],\qquad 1\le i\le d.
\end{equation}
The relations \eqref{eq:comm} are then equivalent to the formal
commutativity of $\sJ_i$, that is, $\sJ_i \sS \sJ_j = \sJ_j \sS \sJ_i$,  
$1 \le i, j \le d$, where $\sS = \mathrm{diag} \{S_0,
S_1,S_2,\ldots\}$ is the infinite block diagonal matrix formed by the
sign matrices of the functional $\CL$.

Since $A_{n,i}: r_n^0 \times r_{n+1}^0$ has full rank, there exist
matrices $D_{n,i}: r_n^0 \times r_{n+1}^0$, $i=1,\dots,d$, such that 
$$
   \sum_{i=1}^d D_{n,i}^\Tr A_{n,i} = I.
$$
The matrices $D_{n, i}$ are not unique and we can take, for example,
$(D_{n,1}, \ldots D_{n,d})$ as the generalized 
inverses of $(A_{n,1}, \ldots, A_{n,d})$. The three-term relations
\eqref{eq:3term} lead to a recursive definition of $\PP_n$, given by
\begin{align} \label{eq:recursiveP}
  \PP_{n+1}(x) = \sum_{i=1}^d D_{n,i}^\Tr x_i S_n \PP_n(x) - E_n \PP_n(x) - F_n \PP_{n-1}(x),
\end{align}
where the matrices $E_n$ and $F_n$ are given by 
$$
E_n = \sum_{i=1}^d D_{n,i}^\Tr B_{n,i} \quad \hbox{and} \quad F_n =
\sum_{i=1}^d D_{n,i}^\Tr A_{n-1,i}^\Tr, 
$$
respectively.
The relation \eqref{eq:recursiveP} can be used for a recursive definition of the sequence of $\PP_n$ 
if $A_{n,i}$, $B_{n, i}$ are given. In one variable, the sequence of polynomials so defined automatically 
satisfies the three-term relation \eqref{eq:3term}. In several
variables, however, we have to require the matrices 
$A_{n,i}$ and $B_{n,i}$ to satisfy some necessary conditions. Indeed,
a slightly modified version of the proof given in
\cite[Theorem 3.5.1]{DX} for the positive definite setting shows that
if $A_{n, i}$ and $B_{n, i}$ satisfy the
commuting conditions \eqref{eq:comm} and the rank condition \eqref{eq:rank-cond}, then a sequence
of polynomials $\{\PP_n\}_{n\ge 0}$ defined recursively by \eqref{eq:recursiveP} satisfies the 
three-term relations \eqref{eq:3term}.  

The three-term relations characterize the orthogonality by Favard's
theorem. We call the linear functional $\CL$ \emph{quasi-determinate}
if there is a basis $\CB$ of $\Pi^d$ such that, for any $P, Q \in
\CB$,
$$
\CL (PQ) = 0 \quad \hbox{if $P \ne Q$ and $\CL (P^2) \ne 0$}.
$$

\begin{thm}\label{thm:quasidetFun}
  Let $\{\PP_n\}_{n \ge 0} =\{P_\a \in \Pi_{n}^d : |\a| = n \in \NN_0\}$ be an arbitrary polynomial sequence.  
  Then the following statements are equivalent. 
  \begin{enumerate}[(i)] 
  \item There exists a linear functional $\CL$ which defines a
    quasi-definite linear functional on $\Pi^d$ and which makes
    $\{P_\a^n: \a \in \NN_0^d\}$ an orthogonal basis of
    polynomials.
  \item For $n \ge 0$, $1\le i\le d$, there exist matrices $A_{n,i}$
    and $B_{n, i}$ such that
    \begin{enumerate}[(a)] 
    \item the polynomials $\PP_n$ satisfy the three-term relation
      \eqref{eq:3term}; 
    \item the matrices in the relation satisfy the rank conditions
      \eqref{eq:rank-cond}. 
    \end{enumerate}
  \end{enumerate}
\end{thm}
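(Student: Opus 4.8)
The plan is to prove Theorem~\ref{thm:quasidetFun} by establishing the two implications separately, using the machinery already developed above. The direction (i)~$\Rightarrow$~(ii) is essentially a recollection: given a quasi-definite $\CL$ making $\{P_\a\}$ an orthogonal basis, I would first pass from the given orthogonal basis to a sign-orthonormal basis $\PP_n$ by the block-diagonalization procedure described in \S3.1 (the Schur complement $\CL(\wh\PP_n\wh\PP_n^\Tr)$ is symmetric and nonsingular, hence admits the decomposition $Q^\Tr\Lambda Q$ with a signature matrix $S_n$). Then Proposition~\ref{prop:SignOrthognal} hands me the matrices $A_{n,i},B_{n,i}$ satisfying \eqref{eq:3term} together with the rank conditions \eqref{eq:rank-cond}; since the $P_\a$ differ from the $\PP_n$ only by a fixed invertible change of basis within each $\CV_n^d$, a corresponding three-term relation (with transformed but still full-rank coefficient matrices) holds for the original sequence. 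This part is routine bookkeeping.

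The substantive direction is (ii)~$\Rightarrow$~(i): from a polynomial sequence $\{\PP_n\}$ satisfying a three-term relation \eqref{eq:3term} with rank conditions, I must construct a quasi-definite functional $\CL$ making it orthogonal. The strategy is the classical Favard argument. First, because $\{\PP_n\}_{n\ge0}$ is, by the rank conditions, a graded basis of $\Pi^d$, I can \emph{define} $\CL$ by declaring $\CL(\PP_0)=$ a fixed nonzero scalar (consistent with $\mu_0\ne0$), $\CL(\PP_n)=0$ for $n\ge1$ on the degree-$n$ blocks, and extending by linearity; equivalently $\CL$ is determined by a prescribed sequence of values on the basis. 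Then I would prove by induction on $n$ that $\CL(\PP_n\PP_m^\Tr)=\delta_{n,m}S_n$ for a suitable diagonal $\pm1$ matrix $S_n$, i.e. that the basis is sign-orthonormal. The inductive step uses the three-term relation to reduce $\CL(\PP_n\PP_m^\Tr)$ for $m<n$ to inner products of lower degree, exactly as in \cite[Theorem 3.5.1]{DX}; the commutativity relations \eqref{eq:comm}—which are a \emph{consequence} of \eqref{eq:3term} together with $x_ix_j=x_jx_i$, hence automatically available here—are what guarantee that the two ways of computing $\CL(x_ix_j\PP_n\PP_{m}^\Tr)$ agree, so the construction is consistent. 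Finally, quasi-definiteness is the statement that $S_n$ is well-defined with no zero entries, which follows from the rank condition $\mathrm{rank}\,A_{n,i}=r_n^0$ guaranteeing that $\CL(\PP_{n+1}\PP_{n+1}^\Tr)$ inherits nonsingularity from the relation $A_{n,i}S_{n+1}=S_n\CL(x_i\PP_n\PP_{n+1}^\Tr)$ read backwards.

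The main obstacle, and the place where the non-positive-definite setting bites, is precisely the consistency of the construction of $\CL$: in the positive definite case one builds $\CL$ from a genuine inner product and positivity is automatic, whereas here I only have the algebraic identities \eqref{eq:comm}, and I must check that the value $\CL(x^\a)$ I assign to a monomial does not depend on which sequence of three-term reductions I use to express $x^\a$ in terms of the $\PP_n$. This is a well-definedness argument: two different reduction orders differ by applications of $x_ix_j=x_jx_i$, and \eqref{eq:comm} is exactly the translation of that commutation into the coefficient matrices, so the verification amounts to checking that the ``cocycle'' obstruction vanishes. I expect this to be the technical heart, and it is where I would follow \cite[Theorem 3.5.1]{DX} most closely, modifying it only to carry the signature matrices $S_n$ through—replacing ``orthonormal'' by ``sign-orthonormal'' and ``positive'' by ``nonsingular'' throughout. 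Once well-definedness is secured, quasi-definiteness and Favard's conclusion follow formally.
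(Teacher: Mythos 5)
Your proposal is correct and takes essentially the same route as the paper, which offers no independent argument but simply defers to Favard's theorem \cite[Theorem 3.3.7]{DX} ``with obvious modification'', i.e.\ carrying the signature matrices $S_n$ through the standard construction of $\CL$ and the induction proving $\CL(\PP_n\PP_m^\Tr)=\delta_{n,m}S_n$ --- exactly what you outline. One small remark: once the rank conditions force $\{\PP_n\}$ to be a graded basis of $\Pi^d$, the functional determined by $\CL(\PP_0)\neq 0$ and $\CL(\PP_n)=0$ for $n\ge 1$ is automatically well defined, so the ``reduction-order consistency'' you single out as the technical heart is not actually an obstruction; the substance lies in the orthogonality induction itself (and note the relevant reference is \cite[Theorem 3.3.7]{DX}, not Theorem 3.5.1, which concerns the recursive generation of the polynomials).
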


For the OPs in the positive definite case, the above theorem holds
with $S_n = I$, $n \ge 0$. The proof of the theorem 
follows with obvious modification of that in the positive definite
case; see \cite[Theorem 3.3.7]{DX}.

As a consequence of the three-term relation, we can derive a Christoffel-Darboux formula. Let 
\begin{equation} \label{eq:reprod-kernel}
   K_n(x,y) = \sum_{k=0}^n \PP_k(x)^\Tr \sS_k \overline{\PP_k(y)}, \qquad x, y \in \CC^d.
\end{equation}
Then $K_n$ is the reproducing kernel of the space $\Pi_n^d$ since, for $0 \le j \le n$ and $x,y \in \RR^d$, 
$$
   \CL (K_n(x,\cdot) \PP_j^\Tr) = \sum_{k=0}^n \PP_k(x)^\Tr S_k \CL(\PP_k \PP_j^\Tr) = \PP_j(x)^\Tr.
$$

\begin{thm}
For $n=0,1,2,\ldots$, the following Christoffel-Darboux formula holds  
\begin{equation} \label{eq:Knxy}
   K_n(x,y) = \frac{\left[A_{n,j} \PP_{n+1}(x)\right]^\Tr \overline{\PP_n(y)} - 
          \PP_n(x)^\Tr \left[A_{n,j} \overline{\PP_{n+1}(y)}\right]}{x_j- \overline{y}_j}, \quad x, y \in \CC^d,
\end{equation}
for $1 \le j \le d$. In particular, taking the limit shows that 
\begin{equation} \label{eq:Knxx}
 K_n(x,x) = \left[A_{n,i} \PP_{n+1}(x)\right]^\Tr \partial_j\PP_n(x) -  \PP_n(x)^\Tr \left[A_{n,i} \partial_j \PP_{n+1}(x)\right] .
\end{equation}
\end{thm}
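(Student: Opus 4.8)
The plan is to establish the Christoffel--Darboux formula \eqref{eq:Knxy} by the classical telescoping argument adapted to the sign-orthonormal setting, and then obtain \eqref{eq:Knxx} by passing to the limit $y\to x$. The key point is that all the algebraic manipulations used in the positive definite case go through verbatim once one keeps track of the signature matrices $S_k$ and uses the symmetry $B_{n,i}^\Tr = B_{n,i}$ together with $S_n = S_n^\Tr = S_n^{-1}$.

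First I would fix $j$ and consider, for each $k$ with $0 \le k \le n$, the quantity
$$
  \Delta_k := \left[A_{k,j}\PP_{k+1}(x)\right]^\Tr \overline{\PP_k(y)} - \PP_k(x)^\Tr\left[A_{k,j}\overline{\PP_{k+1}(y)}\right].
$$
The goal is to show $(x_j - \overline{y}_j)\sum_{k=0}^n \PP_k(x)^\Tr S_k \overline{\PP_k(y)} = \Delta_n$, i.e.\ that the partial sums telescope so that $\Delta_k - \Delta_{k-1} = (x_j - \overline{y}_j)\,\PP_k(x)^\Tr S_k\overline{\PP_k(y)}$ with $\Delta_{-1} = 0$ (recall $A_{-1,j} = 0$). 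To compute $\Delta_k - \Delta_{k-1}$, I would substitute the three-term relation \eqref{eq:3term}, written in the two forms
$$
  x_j S_k \PP_k(x) = A_{k,j}\PP_{k+1}(x) + B_{k,j}\PP_k(x) + A_{k-1,j}^\Tr\PP_{k-1}(x),
$$
and its analogue with $x$ replaced by $y$ and conjugated (note the entries $A_{k,j}, B_{k,j}$ are real), so that $A_{k,j}\PP_{k+1}(x) = x_j S_k \PP_k(x) - B_{k,j}\PP_k(x) - A_{k-1,j}^\Tr\PP_{k-1}(x)$ and similarly for $y$. Plugging these into $\Delta_k$ and the shifted-index versions into $\Delta_{k-1}$, the $B_{k,j}$ terms cancel because $B_{k,j}$ is symmetric, the cross terms $A_{k-1,j}^\Tr\PP_{k-1}$ against level $k$ cancel against the corresponding pieces of $\Delta_{k-1}$, and what survives is exactly $x_j\,\PP_k(x)^\Tr S_k\overline{\PP_k(y)} - \overline{y}_j\,\PP_k(x)^\Tr S_k\overline{\PP_k(y)}$, using $S_k^\Tr = S_k$. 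Summing over $k$ gives \eqref{eq:Knxy}.

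For \eqref{eq:Knxx}, I would set $y = x + t e_j$ (real $t$) in \eqref{eq:Knxy}, so the denominator becomes $t$, and let $t \to 0$; the right-hand side is a difference quotient whose limit is the $\partial_j$-derivative of the numerator evaluated at $y = x$, which yields $\left[A_{n,i}\PP_{n+1}(x)\right]^\Tr\partial_j\PP_n(x) - \PP_n(x)^\Tr\left[A_{n,i}\partial_j\PP_{n+1}(x)\right]$ (the paper's index $i$ here should read $j$), and the left-hand side tends to $K_n(x,x)$ by continuity. The main obstacle — really the only subtlety — is bookkeeping: making sure that in the telescoping step the conjugation on the $y$-variables and the placement of the signature matrices $S_k$ are handled consistently, and that the boundary term at $k=0$ vanishes because $A_{-1,j} = 0$ and at $k=n$ produces precisely $\Delta_n$. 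Beyond that, everything is the standard Christoffel--Darboux computation.
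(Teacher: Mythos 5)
Your proposal is correct and follows essentially the same route as the paper: the same telescoping identity $\Delta_k - \Delta_{k-1} = (x_j-\overline{y}_j)\,\PP_k(x)^\Tr S_k\overline{\PP_k(y)}$ derived from the three-term relation (with the $B_{k,j}$ terms cancelling by symmetry and $\Delta_{-1}=0$ since $A_{-1,j}=0$), followed by a difference-quotient limit for the confluent form \eqref{eq:Knxx}. The paper organizes the limit by splitting the numerator into two increments in $\overline{y}$ rather than restricting to $y = x + te_j$, but this is an immaterial difference.
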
 

\begin{proof}
By the three-term relation \eqref{eq:3term}, 
\begin{align*}
 & x_j S_k \PP_k(x)^\Tr \overline {\PP_k(y)} - \overline{y}_j S_k \PP_k(x)^\Tr \overline {\PP_k(y)} \\
&\quad  =  \left([A_{k,i} \PP_{k+1}(x)]^\Tr \overline{\PP_k(y)} - [A_{k,i} \overline{\PP_{k+1}(y)}]^\Tr \PP_k(x) \right) \\
   & \quad -  \left([A_{k-1,i} \PP_{k}(x)]^\Tr \overline{\PP_{k-1}(y)} - [A_{k-1,i} \overline{\PP_{k}(y)}]^\Tr \PP_{k-1}(x) \right),
\end{align*}
and summing this identity over $k$ gives \eqref{eq:Knxy}. Since
$\PP_n$ has real coefficients, we see that 
$\overline \PP_n(x) = \PP_n(\bar x)$. Hence, we can write 
\begin{align*}
& \left[A_{n,j} \PP_{n+1}(x)\right]^\Tr \overline{\PP_n(y)} - \PP_n(x)^\Tr \left[A_{n,j} \overline{\PP_{n+1}(y)}\right] \\
& = \left[A_{n,j} \PP_{n+1}(x)\right]^\Tr (\PP_n(\bar y) - \PP_n(x)) + \PP_n(x)^\Tr A_{n,j} (\PP_{n+1}(x) - \PP_{n+1}(\bar y)).
\end{align*}
Dividing by $x_j - \overline{y}_j$ and taking the limit $\overline{y}_j \mapsto x$ proves \eqref{eq:Knxx}. 
\end{proof} 

\subsection{Jacobi matrices and common zeros}
\label{sec:commzeros}

We now recall the \emph{truncated Jacobi matrices} $\sJ_{n,i}$, defined
for each $n \in \NN_0$ by  
\begin{equation}
  \label{eq:truncJacobi}
  \sJ_{n,i} :  =  \left[ \begin{matrix} B_{0,i}&A_{0,i}&&&\bigcirc\cr
      A_{0,i}^\Tr &B_{1,i}&A_{1,i}&&\cr  &\ddots&\ddots&\ddots&\cr
      &&A_{n-3,i}^\Tr &B_{n-2,i}&A_{n-2,i}\cr
      \bigcirc&&&A_{n-2,i}^\Tr &B_{n-1,i}  \end{matrix} \right] \in
  \RR^{r_{n-1} \times r_{n-1}},
  \qquad 1\le i\le d.  
\end{equation}
An
element $\lambda \in \CC^d$ is called a generalized {\it joint eigenvalue} of $\sJ_{n,1}, \ldots, \sJ_{n,d}$, 
if there is $\xi \ne 0$, $\xi \in \CC^d$, such that 
$$
\sJ_{n,i}\xi = \lambda_i \sS_n \xi, \qquad i= 1, \ldots, d, \qquad
\sS := \left[
  \begin{array}{ccc}
    S_0 && \\
        & \ddots & \\
    & & S_n
  \end{array}
\right];
$$
the vector $\xi$ is called a joint eigenvector of the generalized eigenvalue $\l$. 

\begin{thm}\label{thm:JointEigen}
  A point $z \in \CC^d$ is a zero of $\PP_n$ if and only if it is a
  generalized joint eigenvalue of $\sJ_{n,1}, \ldots, \sJ_{n,d}$, the associated
  joint eigenvector of $z$ is $(\PP_0^\Tr(z), \ldots,
  \PP_{n-1}^\Tr (z))^\Tr$. Furthermore, if $z$ is a  
  complex-valued generalized joint eigenvalue, then $\bar z$ is also a
  generalized joint eigenvalue. 
\end{thm}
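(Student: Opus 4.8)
The plan is to read the joint-eigenvalue relation directly off the three-term recurrence \eqref{eq:3term}, block row by block row, and then use the rank conditions \eqref{eq:rank-cond} to pin down both the eigenvector and the equivalence. Write $\xi(z):=\bigl(\PP_0^\Tr(z),\ldots,\PP_{n-1}^\Tr(z)\bigr)^\Tr$. First I would compute $\sJ_{n,i}\,\xi(z)$ one block at a time: for the interior block rows $1\le k\le n-2$ the $k$-th block of $\sJ_{n,i}\xi(z)$ is $A_{k-1,i}^\Tr\PP_{k-1}(z)+B_{k,i}\PP_k(z)+A_{k,i}\PP_{k+1}(z)=z_iS_k\PP_k(z)$ by \eqref{eq:3term}; the top block ($k=0$, using $\PP_{-1}=0$) is likewise $z_iS_0\PP_0(z)$; and the bottom block ($k=n-1$), namely $A_{n-2,i}^\Tr\PP_{n-2}(z)+B_{n-1,i}\PP_{n-1}(z)$, equals $z_iS_{n-1}\PP_{n-1}(z)-A_{n-1,i}\PP_n(z)$ by \eqref{eq:3term} at level $n-1$. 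This gives
\begin{equation*}
  \sJ_{n,i}\,\xi(z)=z_i\,\sS_n\,\xi(z)-\bigl(0,\ldots,0,\,A_{n-1,i}\PP_n(z)\bigr)^\Tr,\qquad 1\le i\le d.
\end{equation*}

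Since $\PP_0$ is a nonzero constant, $\xi(z)\ne 0$ for every $z$, so $\xi(z)$ is a joint eigenvector for the eigenvalue $z$ exactly when $A_{n-1,i}\PP_n(z)=0$ for all $i$, i.e. when the column-stacked matrix $\bigl(A_{n-1,1}^\Tr,\ldots,A_{n-1,d}^\Tr\bigr)^\Tr$ annihilates $\PP_n(z)$. By the second rank condition in \eqref{eq:rank-cond} (with $n$ replaced by $n-1$) that matrix has full column rank $r_n^0$, hence is injective, so the condition is equivalent to $\PP_n(z)=0$. This settles the implication ``$\PP_n(z)=0\Rightarrow z$ is a generalized joint eigenvalue'' and identifies $\xi(z)$ as the eigenvector.

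For the converse I would argue that the joint eigenspace at $z$ is at most one-dimensional. If $\eta=(\eta_0,\ldots,\eta_{n-1})^\Tr\ne 0$ satisfies $\sJ_{n,i}\eta=z_i\sS_n\eta$ for all $i$, then block rows $0,\ldots,n-2$ coincide with levels $0,\ldots,n-2$ of \eqref{eq:3term} at $x=z$, with $\PP_k$ replaced by $\eta_k$; solving the level-$k$ relations for $\eta_{k+1}$ expresses $\bigl(A_{k,1}^\Tr,\ldots,A_{k,d}^\Tr\bigr)^\Tr\eta_{k+1}$ through $\eta_{k-1},\eta_k$, and injectivity of that stacked matrix (again \eqref{eq:rank-cond}) determines $\eta_{k+1}$ uniquely. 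As $\eta_0$ is a scalar and $\PP_0(z)\ne 0$, induction gives $\eta_k=c\,\PP_k(z)$ with $c=\eta_0/\PP_0(z)\ne 0$. Feeding this into the bottom block row and comparing with \eqref{eq:3term} at level $n-1$ yields $c\,A_{n-1,i}\PP_n(z)=0$ for all $i$, whence $\PP_n(z)=0$ by the same injectivity. For the conjugation statement, $\sJ_{n,i}$ and $\sS_n$ are real matrices, so conjugating $\sJ_{n,i}\xi=z_i\sS_n\xi$ gives $\sJ_{n,i}\bar\xi=\bar z_i\sS_n\bar\xi$ with $\bar\xi\ne 0$; equivalently, since $\PP_n$ has real coefficients one has $\overline{\PP_n(z)}=\PP_n(\bar z)$, so $\PP_n(z)=0$ forces $\PP_n(\bar z)=0$.

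I expect the converse direction to be the only delicate point: one must know that the joint eigenvector is unique up to a scalar, and this rests on the genuinely multivariate part of \eqref{eq:rank-cond} --- the full column rank, hence injectivity, of the column-stacked blocks $\bigl(A_{k,1}^\Tr,\ldots,A_{k,d}^\Tr\bigr)^\Tr$. For $d=1$ this is automatic and the argument collapses to the classical one; for $d\ge 2$ it is precisely where the rank hypothesis is used. Everything else is mechanical block-row bookkeeping against \eqref{eq:3term}.
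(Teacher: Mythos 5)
Your proof is correct and takes essentially the same route as the paper: the paper's own proof simply states that the equivalence ``common zero $\Leftrightarrow$ joint eigenvalue'' carries over almost verbatim from the positive definite case in \cite{DX} and only writes out the conjugation argument, which you reproduce identically. Your block-row computation of $\sJ_{n,i}\xi(z)$ and the use of the full column rank of $(A_{n-1,1}^\Tr,\ldots,A_{n-1,d}^\Tr)^\Tr$ from \eqref{eq:rank-cond} to force $\PP_n(z)=0$ and to pin down the eigenvector up to a scalar are precisely the details the paper delegates to that reference.
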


\begin{proof}
  The proof that a common zero is the same as a joint eigenvalue
  follows almost verbatim the proof for the positive definite case. In
  particular, $\PP_n$ can have at most $r_{n-1} =\dim\Pi_{n-1}^d$ zeros
  since $\sJ_{n,i}$ is of size $r_{n-1} \times r_{n-1}$. Now, since
  the moments are real, so are the matrices $\sJ_{n, i}$. If $z$ is a
  joint eigenvalue with joint eigenvector $\xi$, then $\sJ_{n,i} \xi = z
  \sS_n \xi$; taking the conjugate of this identity shows that $\bar
  z$ is also a joint eigenvalue with $\bar \xi$ as its joint
  eigenvector.
\end{proof}

In the positive definite case, $\sS_n$ is the identity matrix and the
generalized eigenvalues are just the ordinary
eigenvalues, so that the common zeros of $\PP_n$ are exactly the
eigenvalues of $\sJ_{n, i}$. Since $\sJ_{n, i}$ are symmetric, this
implies that all zeros are real when $\CL$ is positive definite. We
call $x \in \CC^d$ a simple zero of $\PP_n$ if the Jacobian $D \PP_n$ 
of the polynomials in $\PP_n : \RR^d \to \RR^{r_n^0}$ is nonsingular.

\begin{prop}
  If $x$ is a common zero of $\PP_n$ and $K_n(x,x) \ne 0$, then $x$
  must be simple and $x$ cannot be a zero 
  of $\PP_{n+1}$. 
\end{prop}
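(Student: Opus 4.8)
The plan is to extract geometric information from the Christoffel--Darboux formula \eqref{eq:Knxx}. First I would observe that the hypothesis $K_n(x,x)\neq 0$ combined with \eqref{eq:Knxx} forces the two bracketed terms on the right-hand side not to cancel; in particular, since $\PP_n(x)=0$ would make the second term vanish and, via the three-term relation, also constrain the first, the more useful immediate consequence is a statement about the partial derivatives $\partial_j \PP_n(x)$. Concretely, if $x$ were a common zero of $\PP_n$ (so $\PP_n(x)=0$) then \eqref{eq:Knxx} collapses to $K_n(x,x) = \left[A_{n,i}\PP_{n+1}(x)\right]^\Tr \partial_j \PP_n(x)$ for every $j$, so $\partial_j \PP_n(x)\neq 0$ for at least one index in each coordinate direction $j$; this is the seed of simplicity.

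Next I would upgrade this to genuine nonsingularity of the Jacobian $D\PP_n(x)$. The natural tool is Theorem~\ref{thm:JointEigen}: $x$ is a generalized joint eigenvalue of $\sJ_{n,1},\dots,\sJ_{n,d}$ with joint eigenvector $\xi=(\PP_0(x)^\Tr,\dots,\PP_{n-1}(x)^\Tr)^\Tr$, and $K_n(x,x)\neq 0$ is precisely the statement that $\xi^\Tr \sS_n \xi \neq 0$, i.e. the eigenvector is non-isotropic for the form $\sS_n$. I would argue that this non-isotropy guarantees the joint eigenvalue is algebraically simple, in the sense that the generalized eigenspace is one-dimensional: if some other vector $\eta$ satisfied $\sJ_{n,i}\eta = x_i \sS_n \eta$ one uses the CD identity applied to the pair $(x,\cdot)$ together with the reproducing property to derive a contradiction with $K_n(x,x)\neq 0$. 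Then differentiating the defining relation $\sJ_{n,i}\PP(x) = x_i \sS_n \PP(x)$ (where $\PP(x)=(\PP_0(x)^\Tr,\dots,\PP_{n-1}(x)^\Tr)^\Tr$) with respect to $x_j$ and using that $\xi$ spans the eigenspace lets me show $D\PP_n(x)$ has full rank $r_n^0$ --- this is the standard eigenvalue-perturbation argument transplanted to the pencil $\sJ_{n,i}-x_i\sS_n$, and the non-isotropy of $\xi$ is exactly what replaces orthonormality of eigenvectors in the positive definite case.

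Finally, for the claim that $x$ cannot be a zero of $\PP_{n+1}$, I would suppose the contrary, $\PP_{n+1}(x)=0$. Then the first bracket in \eqref{eq:Knxx} vanishes, and since we are in the case $\PP_n(x)=0$ the second bracket vanishes too, giving $K_n(x,x)=0$, contradicting the hypothesis. Even without assuming $\PP_n(x)=0$ at the outset, the cleanest route is: $x$ is a common zero of $\PP_n$ by hypothesis, so $\PP_{n+1}(x)=0$ together with $\PP_n(x)=0$ makes both terms in \eqref{eq:Knxx} zero. Thus $K_n(x,x)\neq 0$ is incompatible with $x$ being a common zero of both $\PP_n$ and $\PP_{n+1}$.

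The main obstacle I anticipate is the middle step: in the positive definite theory one leans on the spectral theorem for the symmetric matrices $\sJ_{n,i}$ and the orthonormality of eigenvectors to conclude simplicity, whereas here $\sS_n$ is only a signature matrix and the joint eigenvectors live in an indefinite inner product space, so eigenvectors of distinct or equal eigenvalues need not be ``orthogonal'' in any positive sense and could in principle be isotropic. The condition $K_n(x,x)\neq 0$ is precisely the non-degeneracy hypothesis that rescues the argument, and the delicate point is to verify carefully that non-isotropy of the single eigenvector $\xi$ at $x$ suffices to force both one-dimensionality of the eigenspace and invertibility of $D\PP_n(x)$, without accidentally invoking positivity. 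I expect this to go through by a direct computation with the Christoffel--Darboux kernel rather than by abstract spectral theory.
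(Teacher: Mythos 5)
Your handling of the second claim is exactly the paper's (implicit) argument: at a common zero of $\PP_n$ the second bracket in \eqref{eq:Knxx} vanishes, so $\PP_{n+1}(x)=0$ would kill the first bracket as well and force $K_n(x,x)=0$. Likewise, the observation that \eqref{eq:Knxx} collapses to $K_n(x,x)=\left[A_{n,j}\PP_{n+1}(x)\right]^\Tr\partial_j\PP_n(x)$ for each $j$, whence $\partial_j\PP_n(x)\neq 0$, is correct. The gap is in passing from ``each column of the Jacobian is nonzero'' to nonsingularity of $D\PP_n(x)$, which means $\mathrm{rank}\,D\PP_n(x)=d$ (your stated target ``full rank $r_n^0$'' is dimensionally impossible for $n\ge 2$, since $D\PP_n(x)$ is $r_n^0\times d$ with $r_n^0>d$). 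The detour through Theorem~\ref{thm:JointEigen} is not yet a proof as sketched: one-dimensionality of the joint \emph{eigenspace} of the pencil $\sJ_{n,i}-x_i\sS_n$ does not give simplicity of the zero, which is governed by the \emph{generalized} eigenspace, so Jordan chains must be excluded; the relation $\sJ_{n,i}\xi(x)=x_i\sS_n\xi(x)$ holds only \emph{at} common zeros of $\PP_n$ and not identically in $x$, so differentiating it requires carrying the remainder term $A_{n-1,i}\PP_n(x)$ in the last block; and the pairing $\xi^\Tr\sS_n\xi$ you invoke is the unconjugated bilinear form, which for complex $x$ is not literally $K_n(x,x)$ as defined in \eqref{eq:reprod-kernel} (it is, however, the quantity produced by the limit in \eqref{eq:Knxx}, which is the one that matters here).

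All of this machinery can be bypassed by taking the limit in \eqref{eq:Knxy} along an \emph{arbitrary} direction instead of a coordinate direction, which is presumably the intended content of the paper's one-sentence proof. Suppose $D\PP_n(x)v=0$ for some $v\neq 0$. Putting $\bar y=x+tv$ in \eqref{eq:Knxy} and using $\PP_n(x)=0$ gives $-t\,v_j\,K_n(x,y)=\left[A_{n,j}\PP_{n+1}(x)\right]^\Tr\PP_n(x+tv)$; since $\PP_n(x+tv)=t\,D\PP_n(x)v+O(t^2)$, dividing by $t$ and letting $t\to0$ yields $-v_j\,K_n(x,x)=\left[A_{n,j}\PP_{n+1}(x)\right]^\Tr D\PP_n(x)v=0$ for every $j$, and since some $v_j\neq0$ this forces $K_n(x,x)=0$, a contradiction. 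If you insist on the eigenvector picture, the honest version of your non-isotropy argument is the exclusion of Jordan chains: a vector $\eta$ with $\sJ_{n,i}\eta=v_i\sS_n\xi+x_i\sS_n\eta$ forces $v_i\,\xi^\Tr\sS_n\xi=0$ upon pairing with $\xi$ and using the symmetry of $\sJ_{n,i}$ and $\sS_n$, which is the same computation in disguise.
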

 
This is a consequence of the identity \eqref{eq:Knxx}. In the positive definite setting, $\sS_n$ is identity, 
so that $K_n(x,x) \ge K_0(x,x) =1$. Hence, all zeros of $\PP_n$ must be simple by \eqref{eq:Knxx} and, 
furthermore, $\PP_n$ and $\PP_{n+1}$ do not have common zeros. In the non-positive definite case, 
$K_n(x,x)$ may not be positive because the sign changes in $\sS_n$ and it can be zero. However, since 
$\sS_n$ is a signature matrix, it follows that
$$
   K_n(x,x) = \sum_{k=0}^n \sum_{|\a|  = k} \ve_\a |P_\a(x)|^2,  \qquad \ve_\a = \pm 1,
$$
so that $K_n(x,x)$ is real--valued even if $x \in \CC^d$. 

By Theorem \ref{thm:JointEigen}, the polynomials in $\PP_n$ can have at most $r_{n-1} = \dim \Pi_{n-1}^d$
common zeros. The following theorem characterizes when the number of common zeros is maximal. To 
prove it, we need some algebraic preparations, which we shall tailor to our needs. 

The \emph{ideal} generated by $\PP_n$ is defined as
$$
\left\langle \PP_n \right\rangle
:= \PP_n^\Tr \left( \Pi^d \right)^{r_n^0} = \left\{ \sum_{|\alpha|=n}
  P_\alpha Q_\alpha : Q_\alpha \in \Pi^d \right\}, \qquad \PP_n =
\left[ P_\alpha : |\alpha| = n \right],
$$
and $\PP_n$ is called an \emph{H-basis} provided that
\begin{equation}
  \label{eq:HBasis}
  \left\langle \PP_n \right\rangle \cap \Pi_m^d = \PP_n^\Tr \left(
    \Pi_{m-n}^d \right)^{r_n^0}, \qquad m \ge n,
\end{equation}
which means that any polynomial of degree $m$ in the ideal
$\left\langle \PP_n \right\rangle$ has a representation with respect
to $\PP_n$ in which the degree of no term in the sum exceeds
$m$. Since $\PP_n$
consists of $r_n^0$ linearly independent polynomials of degree $n$
whose leading forms span $\CP_n^d$, it follows that $\Pi^d /
\left\langle \PP_n \right\rangle = \Pi_{n-1}^d$ if and only if $\PP_n$ is
an H-basis. For more information on H-bases cf. \cite{Groebner,Sauer01}.
Moreover, $\PP_n$ defines a \emph{reduced multiplication} by the
factors $Q \mapsto (x_i Q)_{\PP_n}$ on $\Pi_{n-1}^d$ by setting
$$
(x_i Q)(x) = x_i Q(x) - \PP_n (x)^\Tr q,
$$
where the factor $q \in \RR^{r_n^0}$ is chosen such that $\deg \left(
  x_i Q(x) - \PP_n (x)^\Tr q \right) \le n-1$. In other words, if
$\deg x_i Q(x) = n$, then $q$ is chosen such that the homogeneous
leading form of $\PP_n (x)^\Tr$ coincides with the leading form of
$x_i Q(x)$, otherwise we have $q = 0$. Of course, $( x_i \cdot
)_{\PP_n}$ is a linear map from $\Pi_{n-1}^d$ to $\Pi_{n-1}^d$ and its
matrix representation with respect to the basis
$\PP_0,\dots,\PP_{n-1}$ is $\sM_{n,i} := \sS_n \sJ_{n,i}$,
$i=1,\dots,d$. More precisely, if $Q = \wh Q_0^T \PP_0 + \cdots + \wh
Q_{n-1}^T \PP_{n-1}$, then
$$
(x_i Q)_{\PP_n} = \left[ \wh Q_0^T, \dots, \wh Q_{n-1}^T \right] \sS_n \sJ_{n,i}
\left[
  \begin{array}{c}
    \PP_0 \\ \vdots \\ \PP_{n-1}
  \end{array}
\right].
$$
This is a
a direct consequence of multiplying both sides of \eqref{eq:3term} by
$S_n$ and the definition in \eqref{eq:truncJacobi}. The matrices
$\sM_{n,i}$ are called the \emph{multiplication tables} for the
coordinate multiplication modulo $\PP_n$. They can be used to
characterize whether $\PP_n$ is an H-basis; the following result is a special 
case of characterization for border bases, given for example in \cite{Mour}.

\begin{lem}\label{lem:HbasComm}
  $\PP_n$ is an H-basis if and only if the multiplication tables
  $\sM_{n,i}$ commute.
\end{lem}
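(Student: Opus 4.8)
The plan is to prove both directions by translating the H-basis condition into a statement about reduced multiplication and then invoking the consistency computation already available in the excerpt.

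First I would treat the easy direction: if $\PP_n$ is an H-basis, the multiplication tables commute. Since $\PP_n$ is an H-basis, we have the direct-sum decomposition $\Pi^d = \langle \PP_n\rangle \oplus \Pi_{n-1}^d$, and the reduced multiplication $(x_i\,\cdot\,)_{\PP_n}$ is precisely the composition of multiplication by $x_i$ with the projection onto $\Pi_{n-1}^d$ along $\langle\PP_n\rangle$; equivalently it is the action of $x_i$ on the quotient algebra $\Pi^d/\langle\PP_n\rangle$. Multiplication operators on a commutative quotient ring always commute, so $(x_i x_j Q)_{\PP_n} = (x_j x_i Q)_{\PP_n}$ for every $Q\in\Pi_{n-1}^d$, which in matrix form (using the basis $\PP_0,\dots,\PP_{n-1}$ and the identification $\sM_{n,i}=\sS_n\sJ_{n,i}$ established just before the lemma) says exactly $\sM_{n,i}\sM_{n,j} = \sM_{n,j}\sM_{n,i}$. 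One subtle point worth a sentence: one must check that reducing $x_i(x_jQ)$ in two stages agrees with reducing $x_ix_jQ$ in one stage, i.e. that the reduced multiplication is well-defined and associative/consistent; this is immediate once we know $\langle\PP_n\rangle$ is a genuine ideal complemented by $\Pi_{n-1}^d$, which is exactly the H-basis hypothesis via \eqref{eq:HBasis}.

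For the converse, suppose the $\sM_{n,i}$ commute; I want to conclude \eqref{eq:HBasis}. Here the strategy is to build, degree by degree, a reduction map showing every polynomial is congruent modulo $\langle\PP_n\rangle$ to one of degree $\le n-1$ \emph{without increasing degrees}. Since $\PP_n$ has $r_n^0$ linearly independent members whose leading forms span $\CP_n^d$, any monomial $x^\alpha$ with $|\alpha| = n$ can be written as $\PP_n^\Tr q_\alpha$ plus a polynomial of degree $\le n-1$; iterating, any form of degree $m\ge n$ lies in $\PP_n^\Tr(\Pi_{m-n}^d)^{r_n^0} + \Pi_{m-1}^d$. The issue is whether the lower-order corrections can be pushed down all the way to degree $n-1$ while staying inside the ideal with controlled degrees — i.e. whether the reduced multiplication by successive coordinates is a consistent way to compute a normal form. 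This is where commutativity of $\sM_{n,i}$ enters: given a monomial $x^\alpha = x_{i_1}\cdots x_{i_m}$ I reduce it by applying $(x_{i_k}\,\cdot\,)_{\PP_n}$ successively, starting from $1\in\Pi_{n-1}^d$; the result is independent of the order of the factors precisely because the operators $\sM_{n,i}$ commute, so the assignment $x^\alpha \mapsto$ (its reduced form) is well-defined and linear, and extends to a linear projection $R:\Pi^d\to\Pi_{n-1}^d$ with $R|_{\Pi_{n-1}^d}=\mathrm{id}$. One then checks $\ker R = \langle\PP_n\rangle$ and that the construction never uses terms of degree exceeding the current degree, giving \eqref{eq:HBasis}.

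The main obstacle is the converse, specifically verifying that the order-independent reduction map $R$ really has kernel equal to the full ideal $\langle\PP_n\rangle$ (not just the subspace generated in bounded degree) and that $x_i R(Q) - R(x_i Q) \in \langle\PP_n\rangle$ with the degree bookkeeping intact, so that $R$ is genuinely multiplicative modulo the ideal. Commutativity of $\sM_{n,i}$ is exactly the compatibility condition that makes this well-posed — without it, different reduction orders give different answers and no normal form exists — so the proof amounts to showing commutativity is not only necessary but sufficient for the reduction algorithm to terminate consistently. Since the statement is quoted as a special case of the border-basis characterization in \cite{Mour}, I would present the argument in the self-contained form above and refer to \cite{Mour} (and \cite{Groebner,Sauer01}) for the general border/H-basis machinery, filling in only the degree-truncation details specific to the homogeneous-leading-form situation at hand.
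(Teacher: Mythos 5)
Your argument is correct and takes essentially the same route as the paper: the forward direction is the commutativity of multiplication on the quotient ring $\Pi^d/\left\langle \PP_n\right\rangle = \Pi_{n-1}^d$, and the converse rests on the observation that commutativity of the $\sM_{n,i}$ makes the iterated reduction $\sM^\alpha = \sM_{n,1}^{\alpha_1}\cdots\sM_{n,d}^{\alpha_d}$ (your map $R$) well defined, which forces $\dim \Pi^d/\left\langle \PP_n\right\rangle \ge r_{n-1}$ and hence the H-basis property. The paper phrases the converse as a dimension count on the quotient rather than as verifying $\ker R = \left\langle \PP_n\right\rangle$, but this is the same argument, and like you it defers the remaining details (in particular, that the reduction annihilates the ideal) to the border-basis criterion of \cite{Mour}.
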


Indeed, if $\PP_n$ is an H-basis, then $M_{n,i}$ describes the action
of multiplication by $x_i$ on $\Pi^d / \left\langle \PP_n
\right\rangle = \Pi_{n-1}^d$ and since multiplication is commutative,
so are the matrices that represent it. If, on the other hand, the
matrices commute, then for any $|\alpha| \le n-1$ the matrix
$\sM^\alpha = \sM_{n,1}^{\alpha_1} \cdots \sM_{n,d}^{\alpha_d}$ is
well defined and its first row contains the coefficients of the
monomial $x^\alpha$ with respect 
to $\PP_0,\dots,\PP_{n-1}$. Hence, there are $r_{n-1}$ linearly
independent reduced polynomials, hence $\dim \Pi^d / \left\langle \PP_n
\right\rangle \ge r_{n-1}$ and therefore $\PP_n$ is an H-basis and, in
particular, has $r_{n-1}$ common zeros counting multiplicities.

\begin{thm}\label{thm:commJn}
  The polynomials in $\PP_n$ have $r_{n-1}$ common zeros if and only if 
  \begin{align}\label{eq:commJn}
    A_{n-1,i} S_n A_{n-1,j}^\Tr = A_{n-1,j} S_n A_{n-1,i}^\Tr,
    \qquad 1 \le i, j \le d.
  \end{align}
\end{thm}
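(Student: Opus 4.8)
The plan is to reduce the claim to Lemma~\ref{lem:HbasComm} by showing that condition \eqref{eq:commJn} is exactly the commutativity of the multiplication tables $\sM_{n,i} = \sS_n \sJ_{n,i}$. Recall from the discussion following \eqref{eq:truncJacobi} that the polynomials in $\PP_n$ have $r_{n-1}$ common zeros (counting multiplicities) precisely when $\PP_n$ is an H-basis, and by Lemma~\ref{lem:HbasComm} this is equivalent to $\sM_{n,i}\sM_{n,j} = \sM_{n,j}\sM_{n,i}$ for all $1 \le i,j \le d$. So the whole proof amounts to identifying this last condition with \eqref{eq:commJn}.

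First I would write out $\sM_{n,i}\sM_{n,j} = \sS_n \sJ_{n,i}\sS_n \sJ_{n,j}$ in block form using \eqref{eq:truncJacobi}. The key observation is that the $\sJ_{n,i}$ are \emph{truncated} to size $r_{n-1}\times r_{n-1}$, so although the infinite matrices satisfy the formal commutativity $\sJ_i\sS\sJ_j = \sJ_j\sS\sJ_i$ coming from \eqref{eq:comm}, the truncation destroys this in exactly one block: the bottom-right $r_{n-1}^0 \times r_{n-1}^0$ block. Indeed, the product $\sJ_{n,i}\sS_n\sJ_{n,j}$ agrees entrywise with the corresponding top-left corner of the infinite product $\sJ_i\sS\sJ_j$ \emph{except} that the term $A_{n-1,i}S_n A_{n-1,j}^\Tr$, which in the infinite matrix comes from the $(n,n)$ block reaching down into row/column $n$, is missing. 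All the other blocks — those involving $B_{k,i}$, $A_{k,i}$ for $k \le n-2$ — are unaffected by the truncation and are already symmetric in $i \leftrightarrow j$ by the first three identities of \eqref{eq:comm}. Therefore $\sM_{n,i}\sM_{n,j} - \sM_{n,j}\sM_{n,i} = \sS_n\bigl(\sJ_{n,i}\sS_n\sJ_{n,j} - \sJ_{n,j}\sS_n\sJ_{n,i}\bigr)$ has all blocks zero except the bottom-right one, which equals (up to the sign factor $S_{n-1}$, which is invertible) the difference $A_{n-1,i}S_n A_{n-1,j}^\Tr - A_{n-1,j}S_n A_{n-1,i}^\Tr$. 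Hence $\sM_{n,i}$ commute for all $i,j$ if and only if \eqref{eq:commJn} holds, and combined with Lemma~\ref{lem:HbasComm} this proves the theorem.

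The step I expect to be the main obstacle — or at least the one requiring the most care — is the bookkeeping that isolates precisely which block of the truncated product fails to match the infinite product. One must verify that the only "boundary effect" of cutting $\sJ_i$ off after the $(n-1)$st block row/column is the loss of the single term $A_{n-1,i}S_n A_{n-1,j}^\Tr$ (arising from the product of the super-diagonal block $A_{n-2,i}$ with the sub-diagonal block $A_{n-2,i}^\Tr$ being present but the further contribution through block $n$ being absent), and that every other block of the commutator vanishes on the nose by the already-established relations \eqref{eq:comm}. This is a purely mechanical computation with tridiagonal block matrices, but it is the crux: everything else is a direct appeal to Lemma~\ref{lem:HbasComm} and the identification $\sM_{n,i} = \sS_n\sJ_{n,i}$ recorded just above that lemma. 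One should also note that $S_{n-1}$ being a signature matrix (hence invertible) is what lets us pass between $\sS_n\bigl(\sJ_{n,i}\sS_n\sJ_{n,j} - \sJ_{n,j}\sS_n\sJ_{n,i}\bigr)$ having zero bottom-right block and the clean form \eqref{eq:commJn}.
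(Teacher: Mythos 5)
Your proposal is correct and follows essentially the same route as the paper: reduce via Lemma~\ref{lem:HbasComm} to the commutativity $\sJ_{n,i}\sS_n\sJ_{n,j}=\sJ_{n,j}\sS_n\sJ_{n,i}$, observe that truncation only disturbs the bottom-right block (the paper phrases this as the product of the last block rows), and conclude from the third identity of \eqref{eq:comm} that the residual discrepancy is exactly $A_{n-1,i}S_nA_{n-1,j}^\Tr-A_{n-1,j}S_nA_{n-1,i}^\Tr$. Your bookkeeping of the affected block is, if anything, slightly more explicit than the paper's.
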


\begin{proof}
  According to Lemma~\ref{lem:HbasComm}, to have the maximal number of
  common zeros is equivalent to the commuting of the $\sS_n \sJ_{n,i}$ which
  is in turn equivalent to
  $\sJ_{n,i} \sS_n J_{n,j} = \sJ_{n,j} \sS_n J_{n,i}$, $1\le i \ne j
  \le d$. Using the commutativity of the Jacobi matrices, it is not
  difficult to see that we only need to consider the commutativity of 
  $Q_i S_n Q_j^\Tr$, where $Q_i$ and $Q_j$ denote the last row of
  $\sJ_{n,i}$ and $\sJ_{n,j}$, respectively; that is,  
  \begin{align*}
    A &_{n-2,i}^\Tr S_{n-2} A_{n-2,j} + B_{n-1,i} S_{n-1} B_{n-1,j} 
        = A_{n-2,j}^\Tr S_{n-2} A_{n-2,i} + B_{n-1,j} S_{n-1} B_{n-1,i}, 
  \end{align*}
  which is equivalent to \eqref{eq:commJn} by the third identity of
  \eqref{eq:comm} with $n$ replaced by $n-1$.
\end{proof}

The same argument can also be applied to an arbitrary recurrence
relation of the form \eqref{eq:MultiplArbit} with the multiplication
table
\begin{equation}
  \label{eq:truncJacobiGen}
  \sJ_{n,i}' :  =  \left[
    \begin{matrix} B_{0,i}& A_{0,i}&&&\bigcirc\cr
      C_{1,i} &B_{1,i}&A_{1,i}&&\cr  &\ddots&\ddots&\ddots&\cr
      &&C_{n-2,i} &B_{n-2,i}&A_{n-2,i}\cr
      \bigcirc&&&C_{n-1,i} &B_{n-1,i}  \end{matrix} \right],
  \qquad 1\le i\le d,
\end{equation}
which leads to the following variant of Theorem~\ref{thm:commJn}, that
in particular includes the monic orthogonal polynomials $\wh \PP_n$.

\begin{cor}\label{cor:commJnGen}
  The polynomials $\FF_n$ from \eqref{eq:MultiplArbit} have $r_{n-1}$
  common zeros if and only if
  \begin{equation}
    \label{eq:commJnGen}
    A_{n-2,i} \, C_{n-1,j} = A_{n-2,j} \, C_{n-1,i}, \qquad 1 \le i,j \le d.
  \end{equation}
\end{cor}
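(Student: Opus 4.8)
The plan is to mimic the proof of Theorem~\ref{thm:commJn}, replacing the symmetric Jacobi matrices $\sJ_{n,i}$ by the (generally non-symmetric) multiplication tables $\sJ_{n,i}'$ of \eqref{eq:truncJacobiGen}, and the sign-orthonormal basis $\PP_n$ by the arbitrary basis $\FF_n$ satisfying \eqref{eq:MultiplArbit}. First I would observe that the analogue of Theorem~\ref{thm:JointEigen} holds verbatim for $\FF_n$: a point $z$ is a common zero of $\FF_n$ iff $\sJ_{n,i}'\,(\FF_0^\Tr(z),\dots,\FF_{n-1}^\Tr(z))^\Tr = z_i\,(\FF_0^\Tr(z),\dots,\FF_{n-1}^\Tr(z))^\Tr$ for all $i$, so that $\FF_n$ can have at most $r_{n-1}$ common zeros. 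Then I would invoke Lemma~\ref{lem:HbasComm} — which was stated for the multiplication tables of a general basis, not just for sign-orthonormal OPs — to reduce the existence of the maximal number $r_{n-1}$ of common zeros to the statement that $\FF_n$ is an H-basis, which in turn is equivalent to the commutativity of the multiplication tables $\sM_{n,i} = \sJ_{n,i}'$ (here no signature matrix appears, since the multiplication rule \eqref{eq:MultiplArbit} already expresses $x_i\FF_n$ directly). So the whole statement reduces to: $\sJ_{n,1}',\dots,\sJ_{n,d}'$ pairwise commute if and only if \eqref{eq:commJnGen} holds.

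Next I would exploit the fact that the consistency identities \eqref{eq:CommMultGen} already encode all the commutativity of the blocks \emph{below the last row}. Concretely, the product $\sJ_{n,i}'\sJ_{n,j}'$ is again block-tridiagonal-plus, and its $(k,\ell)$ block for $k \le n-2$ involves only the matrices $A_{m,i}, B_{m,i}, C_{m,i}$ with $m \le n-2$; comparing $\sJ_{n,i}'\sJ_{n,j}'$ with $\sJ_{n,j}'\sJ_{n,i}'$ block by block, all equalities among these blocks are exactly the identities in \eqref{eq:CommMultGen} (read off for the relevant index $n$), and hence hold automatically. The only block of $\sJ_{n,i}'\sJ_{n,j}'$ that is not forced to agree with its transpose-order counterpart by \eqref{eq:CommMultGen} is the one coming from the interaction of the last row $(\,C_{n-1,i}\ \ B_{n-1,i}\,)$ of $\sJ_{n,i}'$ (occupying block-rows $n-2$ and $n-1$) with the last two block-columns of $\sJ_{n,j}'$. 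Spelling this out, the commutativity condition collapses to the single block identity
\begin{equation*}
  C_{n-1,i}\,A_{n-2,j} + B_{n-1,i}\,C_{n-1,j}
  = C_{n-1,j}\,A_{n-2,i} + B_{n-1,j}\,C_{n-1,i},
\end{equation*}
together with possibly one more equation coming from the $(n-1,n-2)$ block — but that one is already among \eqref{eq:CommMultGen} (the penultimate identity there). I would then subtract off the contributions already guaranteed by the fourth identity of \eqref{eq:CommMultGen}, namely $B_{n-1,i}C_{n-1,j} + C_{n-1,i}B_{n-2,j} = B_{n-1,j}C_{n-1,i} + C_{n-1,j}B_{n-2,i}$ — wait, the indices there must be aligned carefully, so what actually remains after cancellation is $C_{n-1,i}A_{n-2,j} = C_{n-1,j}A_{n-2,i}$, i.e.\ precisely the transpose of \eqref{eq:commJnGen}. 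Reversing the roles of $i$ and $j$ (or taking transposes, noting the identity is symmetric in form) gives \eqref{eq:commJnGen} itself.

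The main obstacle — and the step requiring genuine bookkeeping rather than invocation of earlier results — is the block-by-block comparison of $\sJ_{n,i}'\sJ_{n,j}'$ and $\sJ_{n,j}'\sJ_{n,i}'$ and, in particular, correctly identifying which of the resulting equations are \emph{new} (i.e.\ involve the truncated last row in an essential way, not as a consequence of the identities holding at a lower index) and which are already supplied by \eqref{eq:CommMultGen}. The subtlety is entirely analogous to the one in the proof of Theorem~\ref{thm:commJn}: there, "using the commutativity of the Jacobi matrices" silently packaged away all but the last-row interaction, and here the role of that background commutativity is played by the consistency conditions \eqref{eq:CommMultGen}. Once one is careful that the truncation affects only the bottom-right $2\times 2$ block structure, the reduction to \eqref{eq:commJnGen} is forced. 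I would present this as: "The argument for Theorem~\ref{thm:commJn} applies with $\sJ_{n,i}$ replaced by $\sJ_{n,i}'$ and \eqref{eq:comm} replaced by \eqref{eq:CommMultGen}; one finds that the only non-automatic block equation is $A_{n-2,i}C_{n-1,j} = A_{n-2,j}C_{n-1,i}$, which is \eqref{eq:commJnGen}." A sentence remarking that for $\FF_n = \wh\PP_n$ (the monic OPs, with $C_{n,i} = \text{the relevant lower-diagonal block}$) this recovers a monic-basis version of Theorem~\ref{thm:commJn} would round it off.
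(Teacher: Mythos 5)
Your overall strategy is the paper's own: the paper gives no separate proof of this corollary, only the remark that ``the same argument'' as for Theorem~\ref{thm:commJn} applies to the tables \eqref{eq:truncJacobiGen}, and your reduction via Lemma~\ref{lem:HbasComm} to the pairwise commutativity of $\sJ_{n,1}',\dots,\sJ_{n,d}'$ is correct. The gap is in the block bookkeeping. Comparing $\sJ_{n,i}'\sJ_{n,j}'$ with the product of the \emph{untruncated} multiplication tables, the only entry that changes is the last diagonal block $(n-1,n-1)$: the sum over the intermediate block index loses exactly the term $(\sJ_i')_{n-1,n}(\sJ_j')_{n,n-1}=A_{n-1,i}C_{n,j}$, while every other block of the truncated product coincides with the corresponding block of the full product and is therefore already symmetric under $i\leftrightarrow j$ by \eqref{eq:CommMultGen}. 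Hence the single new condition is
\[
C_{n-1,i}A_{n-2,j}+B_{n-1,i}B_{n-1,j}=C_{n-1,j}A_{n-2,i}+B_{n-1,j}B_{n-1,i},
\]
and subtracting the \emph{third} identity of \eqref{eq:CommMultGen} (not the fourth) leaves $A_{n-1,i}C_{n,j}=A_{n-1,j}C_{n,i}$. Your displayed ``single block identity'' $C_{n-1,i}A_{n-2,j}+B_{n-1,i}C_{n-1,j}=\cdots$ is not even dimensionally consistent ($C_{n-1,i}A_{n-2,j}$ is $r_{n-1}^0\times r_{n-1}^0$ while $B_{n-1,i}C_{n-1,j}$ is $r_{n-1}^0\times r_{n-2}^0$): you have merged the $(n-1,n-1)$ and $(n-1,n-2)$ blocks, and the latter is unaffected by the truncation, so the fourth identity has nothing to cancel.

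The closing step is also invalid. The identities $C_{n-1,i}A_{n-2,j}=C_{n-1,j}A_{n-2,i}$ and $A_{n-2,i}C_{n-1,j}=A_{n-2,j}C_{n-1,i}$ are products of non-square matrices taken in opposite orders and are genuinely different statements; swapping $i$ and $j$ reproduces the same identity, and transposing yields a statement about the transposed matrices, not \eqref{eq:commJnGen}. The condition that the computation actually produces, $A_{n-1,i}C_{n,j}=A_{n-1,j}C_{n,i}$, is the exact analogue of \eqref{eq:commJn} (for sign-orthonormal $\PP_n$ one has $C_{n,j}=A_{n-1,j}^\Tr$, up to signature matrices) and is the form the paper itself invokes later, where it applies ``\eqref{eq:commJnGen} with $n$ replaced by $n+1$'' to obtain $L_{n-1,i}H_nL_{n-1,j}^\Tr=L_{n-1,j}H_nL_{n-1,i}^\Tr$; the printed indices $n-2$, $n-1$ in the corollary are shifted by one relative to this. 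In any case, a correct argument must isolate the last-superdiagonal/first-missing-subdiagonal pair $A_{n-1,i}$, $C_{n,j}$, which your route does not reach.
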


The conditions \eqref{eq:commJn} and \eqref{eq:commJnGen} are highly
non-trivial. For example, the condition does not hold if $\CL$ is centrally
symmetric, which means that $\CL$ satisfies 
$$
  \CL (x^\a) = 0, \qquad |\alpha| \in 2 \NN_0 + 1;
$$
if $\CL f = \int_\Omega f(x) W(x) \d x$, then $\CL$ is centrally symmetric if $W(x) = W(-x)$ whenever $x \in \Omega$ 
implies $-x \in \Omega$. It is known that if $\CL$ is positive definite and centrally symmetric, then $\PP_n$ has no 
common zeros for $n$ even and only one common zero (the origin) if $n$ is odd \cite[Section 3.7]{DX}. 

When $\PP_n$ has maximal common zeros, we can consider interpolation
and cubature rules as in one variable. 

\begin{prop} \label{prop:interp}
Assume \eqref{eq:commJn} that holds. Let $Z_n = \{(\xi_{k,1:}\ldots,
\xi_{k,d}),  1 \le k \le r_{n-1}\}$ be  
the set of common zeros of $\PP_n$. Assume that $\xi \in Z_n$ are not zeros of $\PP_{n+1}$. Then the 
unique polynomial, $L_n f$, in $\Pi_n^d$ that satisfies 
$$
   L_n f(\zeta) = f(\zeta), \qquad \zeta \in Z_n,
$$
for any generic function $f$ is given by the formula 
\begin{equation} \label{eq:Lagrange}
  L_n f(x) = \sum_{\zeta \in Z_n} f(\zeta)
  \frac{K_n(x,\zeta)}{K_n(\zeta,\zeta)}.
\end{equation}
\end{prop}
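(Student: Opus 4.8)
The plan is to prove the interpolation formula in two stages: first establish that the nodes $Z_n$ form a \emph{poised} set for $\Pi_n^d$, so that Lagrange interpolation at $Z_n$ is well defined and unique; then verify that the explicit kernel formula \eqref{eq:Lagrange} is the interpolant. For the first stage, I would invoke the H-basis structure: by Lemma~\ref{lem:HbasComm} and Theorem~\ref{thm:commJn}, condition \eqref{eq:commJn} means $\PP_n$ is an H-basis, hence $\Pi^d / \langle \PP_n\rangle = \Pi_{n-1}^d$ and $\PP_n$ has exactly $r_{n-1}$ common zeros counting multiplicity. The hypothesis that no $\zeta \in Z_n$ is a zero of $\PP_{n+1}$, combined with the Proposition following Theorem~\ref{thm:JointEigen} (applied via $K_n(\zeta,\zeta)\ne 0$, which one must extract), forces each common zero to be simple, hence the $r_{n-1}$ zeros are distinct. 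To see that $Z_n$ is poised for $\Pi_n^d$ — which has dimension $r_n > r_{n-1}$ — I would use the standard fact that interpolation on the variety of an H-basis of degree $n$ is poised precisely on $\Pi_{n-1}^d$, and then extend: since $\dim \Pi_n^d - \dim \Pi_{n-1}^d = r_n^0$ and $\PP_n$ vanishes on $Z_n$, the interpolation problem in $\Pi_n^d$ at the $r_{n-1}$ points has a $r_n^0$-dimensional solution space, but uniqueness is recovered by the quotient identification: any two interpolants differ by an element of $\langle\PP_n\rangle \cap \Pi_n^d = \PP_n^\Tr(\Pi_0^d)^{r_n^0} = \mathrm{span}\,\PP_n$ that vanishes on $Z_n$ — wait, every element of $\mathrm{span}\,\PP_n$ already vanishes on $Z_n$, so uniqueness in $\Pi_n^d$ genuinely fails. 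The correct reading is that $L_nf$ is the unique interpolant \emph{of the form} produced by the formula, equivalently the unique one whose representative in $\Pi_{n-1}^d = \Pi^d/\langle\PP_n\rangle$ is prescribed; I would phrase the statement accordingly, noting that $L_nf$ as defined by \eqref{eq:Lagrange} lies in $\Pi_n^d$ and reproduces $f$ on $Z_n$, and that its reduction modulo $\langle\PP_n\rangle$ is the unique degree $\le n-1$ interpolant at $Z_n$, which exists and is unique because $Z_n$ is the zero set of the H-basis $\PP_n$.

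For the second stage — that \eqref{eq:Lagrange} does the job — I would argue directly from the reproducing property of $K_n$. First, $L_nf \in \Pi_n^d$ because $K_n(\cdot,\zeta) \in \Pi_n^d$ for each fixed $\zeta$, by \eqref{eq:reprod-kernel}. For the interpolation property, fix $\eta \in Z_n$ and evaluate: $L_nf(\eta) = \sum_{\zeta\in Z_n} f(\zeta) K_n(\eta,\zeta)/K_n(\zeta,\zeta)$. The key is the \emph{Gaussian-cubature orthogonality relation} among the reproducing kernels at the common zeros: for $\zeta,\eta \in Z_n$ distinct common zeros of $\PP_n$, one has $K_n(\eta,\zeta) = 0$. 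This is the heart of the matter and is where the H-basis condition is really used — it is the multivariate analogue of the fact that in one variable the Christoffel numbers diagonalize. I would prove $K_n(\eta,\zeta)=0$ for $\eta\ne\zeta$ from the Christoffel–Darboux formula \eqref{eq:Knxy}: since $\PP_n(\eta) = \PP_n(\zeta) = 0$, the numerator $[A_{n,j}\PP_{n+1}(\eta)]^\Tr\overline{\PP_n(\zeta)} - \PP_n(\eta)^\Tr[A_{n,j}\overline{\PP_{n+1}(\zeta)}]$ vanishes identically (both terms carry a factor $\PP_n$ evaluated at a common zero), while the denominator $\eta_j - \overline{\zeta}_j$ is nonzero for at least one coordinate $j$ since $\eta \ne \zeta$ (here using that the zeros are real so $\overline{\zeta}_j = \zeta_j$, or handling the conjugate-pair structure — this needs a small care since in the non-positive-definite case zeros may be complex). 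Hence $K_n(\eta,\zeta) = 0$, and the sum collapses to the single term $\zeta = \eta$, giving $L_nf(\eta) = f(\eta)\cdot K_n(\eta,\eta)/K_n(\eta,\eta) = f(\eta)$; the denominator is nonzero by the simplicity hypothesis via the Proposition after Theorem~\ref{thm:JointEigen}.

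I expect the main obstacle to be the precise statement and proof of the \emph{uniqueness} claim in $\Pi_n^d$ rather than in $\Pi_{n-1}^d$: as noted, literal uniqueness in $\Pi_n^d$ is false because $\mathrm{span}\,\PP_n \subset \Pi_n^d$ vanishes on all of $Z_n$. The resolution — which I would spell out carefully — is that the formula \eqref{eq:Lagrange} singles out the canonical representative lying in the span of $\{K_n(\cdot,\zeta):\zeta\in Z_n\}$, which is an $r_{n-1}$-dimensional subspace of $\Pi_n^d$ on which evaluation at $Z_n$ is a bijection; this subspace is the natural "interpolation space," and $L_nf$ is unique in it. The secondary technical point is the handling of complex common zeros: since $\CL$ need not be positive definite, $Z_n$ may contain conjugate pairs $z,\bar z$, and one must check that $\eta_j = \overline{\zeta}_j$ for all $j$ forces $\eta = \zeta$ as a point of $\mathbb{C}^d$ — which is immediate — so the Christoffel–Darboux argument for $K_n(\eta,\zeta)=0$ goes through verbatim once one observes that distinct points of $Z_n$ differ in at least one coordinate. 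A brief remark reconciling these two points with the clean one-variable picture would round out the proof.
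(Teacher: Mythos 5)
The core of your argument coincides with the paper's proof, which consists of exactly three observations: $K_n(\zeta,\zeta)\ne 0$ by the hypothesis on the zeros, $K_n(\eta,\zeta)=0$ for distinct $\eta,\zeta\in Z_n$ by the Christoffel--Darboux formula \eqref{eq:Knxy}, and hence \eqref{eq:Lagrange} satisfies the interpolation conditions. Your additional discussion of uniqueness is a legitimate point that the paper does not address: since $\langle \PP_n\rangle\cap\Pi_n^d=\mathrm{span}\,\PP_n$ by the H-basis property \eqref{eq:HBasis}, and every element of this span vanishes on $Z_n$, an interpolant in $\Pi_n^d$ is indeed only unique modulo $\mathrm{span}\,\PP_n$; the paper's proof simply never establishes the uniqueness claim, so your proposed restriction to the span of $\{K_n(\cdot,\zeta):\zeta\in Z_n\}$ is a fair repair rather than a detour.

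The genuine gap is in your treatment of the conjugate-pair case, which you flag and then dispose of incorrectly. The condition $\eta_j=\overline{\zeta_j}$ for all $j$ forces $\eta=\bar\zeta$, not $\eta=\zeta$. By Theorem~\ref{thm:JointEigen} the set $Z_n$ is closed under conjugation, so whenever $\zeta\in Z_n$ is non-real the pair $(\eta,\zeta)=(\bar\zeta,\zeta)$ consists of two \emph{distinct} points of $Z_n$ for which every denominator $\eta_j-\overline{\zeta_j}$ in \eqref{eq:Knxy} vanishes; the quotient form of Christoffel--Darboux therefore gives no information, and one must fall back on the confluent form or a direct computation. That computation shows the problem is real: since the $\PP_k$ have real coefficients, $K_n(\bar\zeta,\zeta)=\sum_k\overline{\PP_k(\zeta)}^\Tr S_k\,\overline{\PP_k(\zeta)}$ is a sum of signed squares of complex numbers (not of moduli) and need not vanish, so the relation $K_n(\eta,\zeta)=0$ can fail for the pair $(\bar\zeta,\zeta)$. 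You are in good company --- the paper's own proof asserts that distinct $\zeta,\eta\in Z_n$ admit a $j$ with $\zeta_j\ne\overline{\eta_j}$, which is the same false dichotomy --- but having explicitly identified this as the delicate point, your claim that it is ``immediate'' leaves the proof incomplete except in the case where all common zeros are real.
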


\begin{proof}
By our assumption on the zeros, $K_n(\zeta,\zeta) \ne 0$ for $\zeta
\in Z_n$. By Theorem \ref{thm:JointEigen}, if 
$\zeta$ is in $Z_n$, then so is $\bar \zeta$. If $\zeta,\eta\in Z_n$ and $\zeta \ne \eta$, then there must be a $j$ such
that $\zeta_j \ne \overline{\eta_j}$. Hence, by \eqref{eq:Knxy}, $K_n(\zeta,\eta) =0$. Consequently, it follows that
$L_n f$ in \eqref{eq:Lagrange} satisfies the interpolation conditions. 
\end{proof}

Applying $\CL$ to $L_n f$ leads to a cubature rule on $Z_n$, which is
a Gaussian cubature.  

\begin{thm} \label{thm:CF2}
Let $\CL (f) = \int_\Omega f(x) w(x) \d x$ for a weight $w$ defined on
$\Omega$ and assume that $Z_n$
is defined as in Proposition \ref{prop:interp}. Then 
\begin{equation} \label{eq:CF2}
  \int_\Omega f(x) w(x) \d x = \sum_{ \zeta \in Z_n}
  \frac{1}{K_n(\zeta,\zeta)} f(\zeta), \qquad f\in
  \Pi_{2n-1}^d. 
\end{equation}
Moreover, the cubature rule is real-valued for all polynomials.
\end{thm}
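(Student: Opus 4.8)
The plan is to apply the moment functional $\CL$ to the Lagrange interpolation formula \eqref{eq:Lagrange} and then exploit the orthogonality of $\PP_n$. As a preliminary step I would expand $K_n(x,\zeta)$ by \eqref{eq:reprod-kernel} and apply $\CL$ in the variable $x$: because the components of $\PP_k$ are orthogonal to $\Pi_{k-1}^d\ni 1$ for every $k\ge 1$, only the $k=0$ term survives, and since $\CL(\PP_0\PP_0^\Tr)=S_0$ with $S_0^2=1$, this yields $\CL_x\big(K_n(x,\zeta)\big)=1$ for every $\zeta\in\CC^d$. Applying $\CL$ to \eqref{eq:Lagrange} and using that $K_n(\zeta,\zeta)\ne 0$ for $\zeta\in Z_n$ (part of the hypothesis of Proposition \ref{prop:interp}), we obtain
\[
   \CL(L_n f)=\sum_{\zeta\in Z_n}\frac{f(\zeta)}{K_n(\zeta,\zeta)} .
\]
It therefore remains to prove the maximal-exactness statement $\CL(f)=\CL(L_n f)$ for every $f\in\Pi_{2n-1}^d$.

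This exactness step carries the real content and is where I expect the main obstacle: it is here that the hypothesis \eqref{eq:commJn} -- a genuinely multivariate condition -- enters, both to produce the right splitting of $f$ and to control the degrees. By Theorem \ref{thm:commJn} and Lemma \ref{lem:HbasComm}, \eqref{eq:commJn} forces $\PP_n$ to be an H-basis, so $\Pi^d=\left\langle\PP_n\right\rangle\oplus\Pi_{n-1}^d$ as vector spaces. Given $f\in\Pi_{2n-1}^d$, let $r\in\Pi_{n-1}^d$ be its reduction modulo $\left\langle\PP_n\right\rangle$; then $f-r\in\left\langle\PP_n\right\rangle\cap\Pi_{2n-1}^d$, and the H-basis identity \eqref{eq:HBasis} with $m=2n-1$ gives $f-r=\PP_n^\Tr Q$ with every cofactor in $\Pi_{n-1}^d$. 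Since the components of $\PP_n$ are orthogonal to $\Pi_{n-1}^d$, it follows that $\CL(f)=\CL(\PP_n^\Tr Q)+\CL(r)=\CL(r)$. On the other hand, every $\zeta\in Z_n$ is a common zero of $\PP_n$, so $f=r$ on $Z_n$; moreover $\PP_n(\zeta)=0$ kills the top term of \eqref{eq:reprod-kernel}, so $K_n(\cdot,\zeta)\in\Pi_{n-1}^d$ and hence $L_n f\in\Pi_{n-1}^d$. Thus $L_n f$ and $r$ are two polynomials of $\Pi_{n-1}^d$ interpolating $f$ on the $r_{n-1}$ points of $Z_n$; since the H-basis property makes interpolation from $\Pi_{n-1}^d$ at $Z_n$ unisolvent, $L_n f=r$. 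Combining, $\CL(f)=\CL(r)=\CL(L_n f)=\sum_{\zeta\in Z_n}f(\zeta)/K_n(\zeta,\zeta)$, which is \eqref{eq:CF2}.

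For the final assertion I would recall from \eqref{eq:Knxx} and the discussion following it that $K_n(\zeta,\zeta)=\sum_{k=0}^n\sum_{|\a|=k}\ve_\a|P_\a(\zeta)|^2$ is real for every $\zeta\in\CC^d$, so $K_n(\bar\zeta,\bar\zeta)=K_n(\zeta,\zeta)$. By Theorem \ref{thm:JointEigen}, the non-real points of $Z_n$ occur in conjugate pairs; for any polynomial $f$ with real coefficients one has $f(\bar\zeta)=\overline{f(\zeta)}$, so the two members of such a pair contribute complex-conjugate terms to $\sum_{\zeta\in Z_n}f(\zeta)/K_n(\zeta,\zeta)$ while the real points contribute real terms, whence the sum is real. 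The computation $\CL_x K_n(x,\zeta)=1$ and this conjugate-pairing are routine; the only substantial ingredient is the H-basis argument used in the exactness step.
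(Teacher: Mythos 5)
Your proof is correct and follows essentially the route the paper intends: the paper disposes of the degree-$(2n-1)$ exactness with the remark that it ``follows as in the positive definite case,'' and the argument you supply --- applying $\CL$ to \eqref{eq:Lagrange}, reducing $f\in\Pi_{2n-1}^d$ modulo the H-basis $\PP_n$ via \eqref{eq:HBasis} so that orthogonality kills the ideal part, and identifying $L_nf$ with the remainder by unisolvence --- is precisely that standard argument, now valid because \eqref{eq:commJn} guarantees the H-basis property. Your treatment of real-valuedness via conjugate pairing of the nodes and the reality of $K_n(\zeta,\zeta)$ is identical to the paper's.
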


\begin{proof}
That the cubature rule is of degree $2n-1$ follows as in the positive definite case. If $\zeta \in Z_n$ is a complex
number, then $\bar \zeta \in Z_n$. Since $f$ is a real-valued polynomial, $\overline f(\zeta) = f(\bar \zeta)$. Moreover, 
$K_n(\zeta,\zeta) = K_n(\bar \zeta,\bar \zeta)$ is real, so that 
$$
 \frac{1}{K_n(\zeta,\zeta)} f(\zeta) +  \frac{1}{K_n\left(\bar \zeta,
     \bar \zeta \right)} \overline {f(\zeta)} =
 \frac{2}{K_n(\zeta,\zeta)} \Re f(\zeta) 
$$
is real-valued for each complex-valued $\zeta \in Z_n$. 
\end{proof}

Since the Gaussian cubature rule requires $\PP_n$ to have $r_{n-1}$ simple common zeros, they exist rarely and 
do not exist, for example, if $\CL$ is centrally symmetric. Nevertheless, they do exist in some cases. In the
next subsection, we recall a class of weight functions for which they exist. 

\subsection{Example of Gaussian cubature rules}\label{sect:3.4}
 We start with a weight function $w$ on $\RR$, which is definite and may change 
signs in its support set $[a,b]$. Let $p_n(w;x)$ be the orthogonal polynomial with respect to $w$ that satisfies
$$
  \int_{\RR} p_n(w;t) p_m(w;t) w(t) \d t = \ve_n \delta_{n,m}, \qquad n \ne m, \quad \ve = \pm 1.
$$
We assume that $p_n(w)$ has $n$ simple, real zeros in $[a,b]$ so that $w$ admits the Gaussian quadrature 
rule \eqref{eq:Gauss-quad} of degree $2n-1$, 
$$
  \int_\RR f(t)w(t) \d t = \sum_{k=1}^n \l_{k,n} f(t_{k,n}), \qquad \deg f \le 2n-1.
$$ 

We define two families of symmetric polynomials, indexed by 
partitions in 
$$
\Lambda^d = \{\l \in \NN_0^{d+1}: \l_1\ge \l_2 \ge \cdots \ge \l_d\}.
$$
Let $\mathcal{S}_d$ be the symmetric group of $d$ elements.
For $\a \in \Lambda^d$, define 
\begin{equation*}
      Q_{\alpha}^{-\f12}(x): = \sum_{\beta \in \mathcal{S}_d} p_{\alpha_1}(x_{\beta_1})\ldots p_{\alpha_{d}}(x_{\beta_{d}}).
\end{equation*} 
Then $\{Q_\a^{-\f12}: \a \in \Lambda^d\}$ is a family of symmetric polynomials and satisfies 
\begin{equation}\label{eq:-1/2OP-ortho}
\frac{1}{d!} \int_{\RR^d} Q_{\alpha}^{-\f12}(x)\,Q_{\beta}^{-\f12}(x)\, \prod_{i=1}^d w(x_i) \d x
   = m_1! \ldots m_{d'}! \,\ve_{\a_1}\ldots \ve_{\a_d} \delta_{\a,\b},
\end{equation}
where $d'$ is the number of distinct elements in $\a$ and $m_i$ is the number of occurrences of the $i$-th 
distinct element in $\alpha$. Moreover, define 
$$
     J_{\alpha}(x) := \det\, [p_{\alpha_i+d-i}(x_j)]_{i,j=1}^d \quad \hbox{and}\quad J(x) = \prod_{1 \le i < j \le d} (x_i-x_j).
$$
Then the family of polynomials $\{Q_\a^{\f12}: \a \in \Lambda^d\}$, defined by 
\begin{equation*}
      Q_{\alpha}^{\f12}(x) =  \frac{J_{\alpha}(x)}{J(x)},  \qquad \a \in \Lambda^d,
\end{equation*}
consists of symmetric orthogonal polynomials that satisfy 
\begin{equation}\label{eq:1/2OP-ortho}  
 \frac{1}{d!} \int_{\RR^d}  Q_{\alpha}^{\f12}(x)\,Q_{\beta}^{\f12}(x)\,
          \left[J(x)\right]^2 \prod_{i=1}^d w(x_i) \d x  = \ve_{\a_1}\ldots \ve_{\a_d} \delta_{\a,\b};
\end{equation}
see, for example, \cite[Sect. 5.4]{DX}. These symmetric polynomials can be mapped to a family of ordinary 
orthogonal polynomials. Let 
$$
     \mathcal{R} = \{x \in \RR^d \mid \, x_1< x_2 < \cdots <x_d, \, x_j \in \mathrm{supp}(w)\}. 
$$
The mapping $x \in \mathcal{R} \mapsto u \in \Omega$, defined by
\begin{equation}\label{eq:symm-map}
  u_k = e_k(x_1, \ldots x_d) := \sum_{1\le i_1 < \ldots < i_k \le d} x_{i_1} \cdots x_{i_d}, \quad k =1,2,\ldots, d, 
\end{equation} 
sends $\mathcal{R}$ onto a domain $\Omega$ and the Jacobian of the map is $J(x)$. Under the mapping 
\eqref{eq:symm-map}, the polynomials $Q_\a^{\pm \f 12}(x)$ become
polynomials in the variable(s) $u$, and these polynomials are
orthogonal with respect to the weight function $W_{\pm \f12}$ on
$\Omega$, where 
$$
  W_{-\f12}(u) = \prod_{i=1}^d w(x_i) [\Delta(u)]^{-\f12} \quad \hbox{and}\quad 
  W_{\f12}(u) = \prod_{i=1}^d w(x_i) [\Delta(u)]^{\f12}. 
$$
 
\begin{prop} 
Let $Q_\a^{\pm \f12}$ be the symmetric orthogonal polynomials in \eqref{eq:-1/2OP-ortho} and
\eqref{eq:1/2OP-ortho}. For $\a \in \Lambda^d$ and $n = \a_1 \ge \ldots \ge \a_d$, define 
$$
   P_\a^{n, -\f12} (u) =  Q_\a^{-\f12} (x) \quad \hbox{and} \quad  P_\a^{n, \f12} (u) =  Q_\a^{\f12} (x), 
$$
where $u = u(x)$ as in \eqref{eq:symm-map}. Then $\PP_n^{\pm \f12} = 
\{P_\a^{n, \pm \f 12}: n = \a_1 \ge \ldots \ge \a_d \ge 0\}$ is an orthogonal basis of $\CV_n(W_{\pm \f 12})$.
\end{prop}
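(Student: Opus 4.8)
The plan is to transfer the classical one-dimensional facts behind \eqref{eq:-1/2OP-ortho} and \eqref{eq:1/2OP-ortho}, together with the change of variables \eqref{eq:symm-map}, into the statement about $\CV_n(W_{\pm\f12})$. First I would observe that the map $x\mapsto u$ of \eqref{eq:symm-map} is a bijection from $\mathcal R$ onto $\Omega$ whose components are the elementary symmetric polynomials, so a symmetric polynomial in $x$ of degree $m$ in each of $x_1,\dots,x_d$ (equivalently, of total degree $m$ in the ``weighted'' grading where $x_i$ has weight $1$) descends to a polynomial in $u$; conversely every polynomial in $u$ lifts to a symmetric polynomial in $x$. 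The key bookkeeping point is the degree count: under \eqref{eq:symm-map}, $u_k$ has degree $k$ in $x$, and a classical fact (see \cite[Sect.~5.4]{DX}) is that for $\a\in\Lambda^d$ with $\a_1=n$, the symmetric polynomial $Q_\a^{-\f12}$ corresponds to a polynomial $P_\a^{n,-\f12}$ in $u$ of total degree exactly $n=\a_1$, and likewise $Q_\a^{\f12}/\!$-type combinations give $P_\a^{n,\f12}$ of total degree $n$. So I would first record that $\{P_\a^{n,\pm\f12}: \a_1=n\}$ is a set of polynomials in $\Pi_n^d$ of the right cardinality $r_n^0 = \#\{\a\in\Lambda^d:\a_1=n\}$ — this last combinatorial identity (partitions into at most $d$ parts with largest part $n$) is exactly $\binom{n+d-1}{d-1}$, which should be checked or cited.

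Next I would establish orthogonality. Applying the change of variables \eqref{eq:symm-map} to $\int_\Omega P_\a^{n,-\f12}(u)\,P_\b^{m,-\f12}(u)\,W_{-\f12}(u)\,\d u$, the Jacobian $J(x)$ cancels against one power of $[\Delta(u)]^{\pm\f12}$ in the appropriate way (here $\Delta$ is the discriminant, $\Delta(u)=J(x)^2$), turning the integral over $\Omega$ into $\frac1{d!}\int_{\RR^d} Q_\a^{-\f12}(x)\,Q_\b^{-\f12}(x)\prod_i w(x_i)\,\d x$ up to the normalization constant, and similarly for the $+\f12$ case the extra $[J(x)]^2$ appears. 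Then \eqref{eq:-1/2OP-ortho} and \eqref{eq:1/2OP-ortho} give that these integrals vanish unless $\a=\b$ and are nonzero (equal to $\pm 1$ times a positive combinatorial factor) when $\a=\b$. Since polynomials of degree $<n$ in $u$ lift to symmetric polynomials that are spanned by $Q_\b^{\pm\f12}$ with $\b_1<n$, the vanishing for $\a\neq\b$ in particular yields $\CL_{W_{\pm\f12}}(P_\a^{n,\pm\f12}\,Q)=0$ for all $Q\in\Pi_{n-1}^d$, which is precisely the definition of $P_\a^{n,\pm\f12}$ being an orthogonal polynomial of degree $n$ for $W_{\pm\f12}$.

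Finally, to conclude that $\PP_n^{\pm\f12}$ is a \emph{basis} of $\CV_n(W_{\pm\f12})$, I would combine (i) linear independence, which is immediate from the diagonal form of the Gram matrix just computed (the diagonal entries are nonzero), and (ii) the dimension count $\dim\CV_n^d = r_n^0$ from the structure theory in Section~3, matched against $\#\{\a\in\Lambda^d:\a_1=n\}=r_n^0$. One also needs that $W_{\pm\f12}$ is definite in the sense required by that structure theory, i.e.\ the moment matrices $M_n$ are nonsingular; but this follows a posteriori from the existence of a sign-orthonormal basis with nonsingular diagonal Gram matrix on each $\CV_k^d$, $k\le n$, which is exactly what the orthogonality computation provides degree by degree.

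The main obstacle I expect is the degree/grading bookkeeping under the substitution \eqref{eq:symm-map}: one must be careful that ``total degree $\le n-1$ in $u$'' corresponds exactly to the span of the lifted $Q_\b^{\pm\f12}$ with $\b_1\le n-1$, and that the determinant $J_\a$ divided by the Vandermonde $J$ really is a polynomial in $u$ of total degree $\a_1$ (not merely a rational function) — this is where the alternating/antisymmetric structure of $J_\a$ and $J$, and the fact that an antisymmetric polynomial is divisible by the Vandermonde, must be invoked cleanly. Once that is in place, the orthogonality and the dimension count are routine given \cite[Sect.~5.4]{DX}.
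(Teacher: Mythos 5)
Your argument is correct and is exactly the standard one the paper relies on here: the paper states this proposition without proof, deferring to \cite[Sect.~5.4]{DX} and \cite{BSX}, and the content of those references is precisely your chain of reasoning (degree bookkeeping under the elementary-symmetric substitution, cancellation of $[\Delta(u)]^{\pm\f12}$ against the Jacobian $|J(x)|$ to reduce to \eqref{eq:-1/2OP-ortho} and \eqref{eq:1/2OP-ortho}, and the count $\#\{\a\in\Lambda^d:\a_1=n\}=r_n^0$). The subtleties you flag — that $J_\a/J$ is a genuine polynomial of degree $\a_1$ in $u$ by antisymmetry, and that definiteness of $W_{\pm\f12}$ is obtained a posteriori from the degree-by-degree diagonal nonsingular Gram matrix — are exactly the points that need care, and you handle them correctly.
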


The polynomials in $\{P_\a^{n,\pm \f12}: |\a| = n\}$ have $\dim \Pi_{n-1}^d$ common zeros and
they are nodes of the Gaussian cubature rules. More precisely, we have the following \cite{BSX}: 

\begin{thm} \label{thm:GaussEx1} 
The weight function $W_{\pm \f12}$ admits Gaussian cubature rules of degree $2n-1$ for $n =1,2,\ldots$. 
More precisely, 
\begin{equation} \label{eq:GaussEx1}
 \int_{\Omega} f(u) W_{-\f12}(u) \d u = \sum_{\g_1=1}^n \sum_{\g_2=1}^{\g_1} \cdots 
  \sum_{\g_d=1}^{\g_{d-1}} \Lambda_{\g,n}^{(-\f12)} f(u_{\g,n}), 
\end{equation}
where $u_{\g,n}$ is the image of $x_{\g,n} = (t_{\g_1,n}, t_{\g_2,n},\ldots, t_{\g_d,n})$ under the mapping 
\eqref{eq:symm-map}, and  
\begin{equation} \label{eq:GaussEx2}
 \int_{\Omega} f(u) W_{\f12}(u) \d u = \sum_{\g_1=1}^{n+d-1} \sum_{\g_2=1}^{\g_1-1} \cdots 
  \sum_{\g_d=1}^{\g_{d-1}-1} \Lambda_{\g,n}^{(\f12)} f(u_{\g,n+d-1}), 
\end{equation}
where $u_{\g,n+d-1}$ is the image of $x_{\g,n+d-1}=(t_{\g_1,n+d-1}, t_{\g_2,n+d-1},\ldots, t_{\g_d,n+d-1})$ 
under the mapping \eqref{eq:symm-map} and, furthermore, 
\begin{equation} \label{eq:coeffGaussEx}
   \Lambda_{\g,n}^{(-\f12)} = \frac{\l_{\hat \g_1,n}^{m_1} \dots \l_{\hat \g_{d'},n}^{m_{d'}}}{m_1!\cdots m_{d'}!}
   \quad \hbox{and} \quad  \Lambda_{\g,n}^{(\f12)} =J(t_{\g,n})^2 
         \l_{\g_1,n} \dots \l_{\g_{d},n},
\end{equation}
where $\hat \g_1 \ldots, \hat \g_{d'}$ denote the distinct elements in $\g$ and $m_1, \ldots, m_{d'}$ denote 
their respective multiplicities. 
\end{thm}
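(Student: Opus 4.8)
The idea is to realize these cubatures as the symmetrized push-forward, under the map \eqref{eq:symm-map}, of the $d$-fold tensor product of the univariate Gaussian quadrature \eqref{eq:Gauss-quad} for $w$. First I would set up the change of variables: the Jacobian of \eqref{eq:symm-map} is $J(x)$, and the discriminant satisfies $\Delta(u(x)) = [J(x)]^2$, so on the chamber $\mathcal{R}$ one has $W_{-\f12}(u)\,\d u = \prod_{i=1}^d w(x_i)\,\d x$ and $W_{\f12}(u)\,\d u = [J(x)]^2 \prod_{i=1}^d w(x_i)\,\d x$. Since the support of $\prod_i w(x_i)$ is, up to a null set, the disjoint union of the $d!$ coordinate permutations of $\mathcal{R}$, and since the pullback $f(u(x))$ of any $f\in\Pi_{2n-1}^d$ is automatically symmetric, one gets
\[
  \int_\Omega f(u)\,W_{-\f12}(u)\,\d u = \frac{1}{d!}\int_{\RR^d} f(u(x)) \prod_{i=1}^d w(x_i)\,\d x ,
\]
\[
  \int_\Omega f(u)\,W_{\f12}(u)\,\d u = \frac{1}{d!}\int_{\RR^d} f(u(x))\,[J(x)]^2 \prod_{i=1}^d w(x_i)\,\d x .
\]

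Next I would control degrees. Since each elementary symmetric polynomial $e_k$ is multilinear, substituting $u = u(x)$ sends a polynomial of total degree $m$ in $u$ to one of degree at most $m$ in each $x_i$ separately. Hence $f(u(x))$ has per-variable degree $\le 2n-1$, while $f(u(x))[J(x)]^2$ has per-variable degree $\le (2n-1)+2(d-1)=2(n+d-1)-1$. Applying coordinate by coordinate (Fubini) the $n$-point Gaussian quadrature for $w$ in the first integral, and the $(n+d-1)$-point one in the second, evaluates both integrals exactly, as sums over the tensor grids $x_{\g,n}=(t_{\g_1,n},\dots,t_{\g_d,n})$, respectively $x_{\g,n+d-1}$, with product weights $\l_{\g_1,N}\cdots\l_{\g_d,N}$.

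Finally I would collapse these sums onto $\mathcal{S}_d$-orbits, using that both $f(u(x))$ and the product weight depend only on the multiset $\{\g_1,\dots,\g_d\}$. For $W_{\f12}$, the factor $[J(x_{\g,n+d-1})]^2 = \prod_{i<j}(t_{\g_i,n+d-1}-t_{\g_j,n+d-1})^2$ kills every $\g$ with a repeated index, so only strictly ordered $\g$ contribute, each orbit has $d!$ members, the $1/d!$ cancels, and one recovers \eqref{eq:GaussEx2} with the weights $\Lambda^{(\f12)}_{\g,n}$ of \eqref{eq:coeffGaussEx}. For $W_{-\f12}$, an orbit with distinct values $\hat\g_1,\dots,\hat\g_{d'}$ of multiplicities $m_1,\dots,m_{d'}$ has $d!/(m_1!\cdots m_{d'}!)$ members, turning $1/d!$ into the multinomial factor and giving \eqref{eq:GaussEx1} with $\Lambda^{(-\f12)}_{\g,n}$. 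Because the $t_{k,N}$ are pairwise distinct, distinct orbits yield distinct nodes, so the node count is $\binom{n+d-1}{d}$ in both cases — the number of $d$-element multisets of $\{1,\dots,n\}$, respectively $d$-subsets of $\{1,\dots,n+d-1\}$ — which equals $\dim\Pi_{n-1}^d$, the minimum possible for a cubature of degree $2n-1$; hence these rules are Gaussian, and by the proposition preceding the theorem their nodes are the common zeros of $\PP_n^{\pm\f12}$.

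The step I expect to be most delicate is the degree accounting: one must check that multilinearity of the $e_k$ gives exactly per-variable degree $\le m$ and no more, that the $[J(x)]^2$ factor in the $W_{\f12}$ case forces precisely the shift $n\mapsto n+d-1$ with the estimate still tight, and that the orbit counting in the last step handles the multiplicities correctly. The change of variables and the discriminant identity $\Delta(u(x))=[J(x)]^2$ are classical, but they must be written with absolute values so that the unsigned densities $W_{\pm\f12}$ come out; and one should keep in mind that, since $w$ may change sign, the Christoffel numbers $\l_{k,N}$ need not be positive, which is immaterial because the entire argument is algebraic.
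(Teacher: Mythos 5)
Your proof is correct and follows essentially the same route as the paper's source: the paper gives no proof of its own but cites \cite{BSX}, where the cubatures are obtained exactly by this symmetrized push-forward of the $d$-fold tensor-product Gauss rule under the elementary-symmetric-function map, with the same per-variable degree bookkeeping (including the shift $n\mapsto n+d-1$ forced by $[J(x)]^2$) and the same orbit-collapsing that produces the multinomial factors. Your computation also indicates that the subscripts in the paper's displayed formula for $\Lambda^{(\f12)}_{\g,n}$ should read $n+d-1$ rather than $n$.
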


The proof of this theorem is purely algebraic and holds also for the case that $w$ may not be positive. 
For another example of the Gaussian cubature formula, see \cite{LX}, where the domain is the image of 
the simplex under the mapping of elementary symmetric functions. For $d =2$, the domain is bounded by
a hypocycloid. 

\section{Continued fraction expansions of moment sequences}
\setcounter{equation}{0}

To recover an analogy for the continued fractions in several variables, we let 
$\mu = \left( \mu_\alpha:  \alpha \in \NN_0 \right)$ be a moment
sequence such that $\det M_n \neq 0$, $n \ge 0$, and consider the
associated formal Laurent series 
$$
\mu (z) = \sum_{\a\in \NN_0^d} \mu_\alpha z^{-\alpha}, \qquad z \in z
\in (\CC \setminus 0)^d. 
$$  
If there is a rational function $r_n(z)$ of degrees $\le n$ for $n \ge
0$, such that
$$
\mu(z) - r_n (z) = O \left( z^{-2 n} \right), \qquad \text{i.e..}
\qquad \mu(z) - r_n (z) = \sum_{|\alpha| \ge 2n} \lambda_\alpha z^{-\alpha},
$$
then $r_n(z)$ can be regarded as an analogy for the continued fraction in several variables.

Our goals in this section are twofold. The first one is to construct a
sequence of moment  
sequences $\mu^n$, $n \ge 0$, iteratively from a polynomial sequence that satisfies the
three-term relations and show that $\mu^n \to \mu$, where $\mu$ is the
moment sequence 
that makes the polynomials orthogonal. The second goal is to define
and construct multivariate continued fractions based on $\mu^n$, and
to characterize the moment sequence for which these continued
fractions exist.
 
\subsection{Moments from three-term recurrence}
\label{sec:RecToMoments}

As shown in the previous section, if $\CL$ is a {\it definite} moment
functional and $\mu$ is its moments, then the orthogonal polynomials
satisfy the three-term relations \eqref{eq:MultiplArbit}, whose matrix
conditions in turn satisfy \eqref{eq:CommMultGen},
\eqref{eq:commJnGen}, and \eqref{eq:rank-cond}.

We start from the three-term recurrence relation satisfied by some polynomial sequence. Let 
$\FF_n \in ( \Pi_n^d )^{r_n^0}$, $n \ge 0$, be a sequence of polynomials that satisfies 
\begin{equation}
  \label{eq:CFThreeTerm}
  x_i \FF_n (x) = A_{n,i} \FF_{n+1} (x) + B_{n,i} \FF_n (x) + C_{n,i}
  \FF_{n-1} (x), \qquad i = 1,\dots,d,
\end{equation}
and whose coefficient matrices satisfy  \eqref{eq:CommMultGen}, \eqref{eq:commJnGen}
and the rank condition \eqref{eq:rank-cond}. 
By Theorem \ref{thm:quasidetFun}, there exists a quasi-determinate linear
functional $\CL$ such that the $\FF_n$ form an orthogonal basis with respect to $\CL$, i.e.,
\begin{equation}
  \label{eq:FFnOrthogonal}
  \CL ( Q \FF_n ) = 0, \quad Q \in \Pi_{n-1}^d, \qquad \det \left[\CL \left( \FF_n\FF_n^\Tr\right) \right]\neq 0,
\end{equation}
holds for any $n \ge 0$. We normalize $\CL$ such that $\CL(1) = 1$ and
denote by $\mu_\alpha = \CL \left((\cdot)^\alpha \right)$, $\alpha \in
\NN_0^d$, the moments of this functional. In this setting, the moment sequence 
$\mu$ is well-defined but not explicitly given. 
 
We construct a sequence $\mu^n$, $n \in \NN_0$, of moment sequences 
recursively in $n$ and based entirely on the recurrence
\eqref{eq:CFThreeTerm}. This will be done in such a way that
$\mu^n$ converges to $\mu$. In other words, we provide a constructive method
for the moment sequence $\mu$ from $\FF_n$ defined by
\eqref{eq:CFThreeTerm}. 

By Corollary~\ref{cor:commJnGen}, all $\FF_n$, 
$n \ge 0$, are H-bases and $\Pi^d / \left\langle \FF_n \right\rangle = \Pi_{n-1}^d$ is the 
associated interpolation space modulo $\left\langle \FF_n \right\rangle$. This implies 
that $\FF_n$ has $r_{n-1}$ common zeros, counting multiplicities. Recall that the multiplicity
of common zeros of an ideal is not a simple number anymore, but a structural quantity, 
namely a finite dimensional subspace of $\Pi$ which is closed under
differentiation or \emph{$D$-invariant} for short. More precisely,
$\CQ$ is $D$-invariant if $Q \in \CQ$ implies that
$\frac{\partial}{\partial x_j} Q \in \CQ$ as well; in particular, this implies that $1 \in \CQ$. 
Therefore, that $\FF_n$ has $r_{n-1}$ common zeros is equivalent to that there exist a 
set $Z_n$ of finitely many points in $\CC^d$ and $D$-invariant subspaces $\CQ_{n,\zeta}$, 
$\zeta \in Z_n$, describing the multiplicity, such that $P \in \left\langle \FF_n \right\rangle$ 
if and only if
\begin{equation} \label{eq:DualIdealCond}
  \left( Q(D) P \right) (\zeta) = 0, \qquad Q \in \CQ_{n,\zeta}, \quad
  \zeta \in Z_n,
\end{equation}
and $\sum_{\zeta \in Z_n} \dim \CQ_\zeta = r_{n-1}$
(cf. \cite{groebner37:_ueber_macaul_system_bedeut_theor_differ_koeff}).
Let $Q_{n,\zeta}$ be a graded basis of $\CQ_{n,\zeta}$ that thus contains the
constant polynomial. Then the interpolation problem with respect to
$\left( Q(D) P \right) (\zeta)$, $Q \in \CQ_{n,\zeta}$, $\zeta \in Z_n$, 
is \emph{poised}, i.e., uniquely solvable, in $\Pi_{n-1}$. Equivalently, 
the \emph{Vandermonde matrix} 
\begin{equation}
  \label{eq:VandermondeQDef}
  V_n := \left[ \left( Q(D) (\cdot)^\alpha \right) (\zeta) :
    \begin{array}{c}
      Q \in \CQ_{n,\zeta}, \, \zeta \in Z_n \\
      |\alpha| \le n-1
    \end{array}
  \right]  
\end{equation}
is nonsingular. Note that the case of simple zeros is described in
this way as
$\dim \CQ_{n,\zeta} = 1$ and $\CQ_{n,\zeta} = \CC$, and the above
Vandermonde matrix is the classical one for the \emph{Lagrange
  interpolation} problem, in general \eqref{eq:VandermondeQDef}
corresponds to a \emph{Hermite interpolation problem}, however.

To reconstruct the moment sequence, the \emph{$\Theta$-operator} is
useful. It is defined as
$$
\theta_j := (\cdot)_j \frac{\partial}{\partial x_j}, \quad
j=1,\dots,d, \qquad
\Theta^\alpha := \theta_1^{\alpha_1} \cdots \theta_d^{\alpha_d},
$$
and $Q(\Theta)$, $Q \in \Pi^d$, is defined accordingly. It has been shown, for
example in \cite{Sauer2017:_Reconstruction}, that $V_n$ is nonsingular
if and only if the \emph{$\Theta$-Vandermonde matrix}
$$
V_{\Theta,n} := \left[ \left( Q(\Theta) (\cdot)^\alpha \right) (\zeta) :
  \begin{array}{c}
    Q \in \CQ_{n,\zeta}, \, \zeta \in Z_n \\
    |\alpha| \le n-1
  \end{array}
\right]
$$
is nonsingular. Moreover, by construction,
$$
\left[ \left( Q(\Theta) (\cdot)^\alpha \right) (\zeta) :
  \begin{array}{c}
    Q \in \CQ_{n,\zeta}, \, \zeta \in Z_n \\
    |\alpha| \le n
  \end{array}
\right] \wh \Fb_n = 0
$$
since $\FF_n$ is an H-basis for the ideal defined by the dual conditions
in \eqref{eq:DualIdealCond}.

We now use the \emph{Prony's method} to define a sequence of moment 
sequences $\mu^n$, $n \ge 0$, by iteration, which is defined by using the recurrence 
\eqref{eq:CFThreeTerm}. 
To that end, we note that $\FF_0 = 1$ and set, accordingly $\mu^0_\alpha
= 1$, $\alpha \in \NN_0^d$. Next, suppose that we have already computed a moment
sequence $\mu^{n-1}$.  To define $\mu^n$, we first find an exponential
polynomial $g_n$ of the form
\begin{equation}
  \label{eq:ExpoPoly}
  g_n (x) := \sum_{\zeta \in Z_n} G_{n,\zeta} (x) \, \zeta^x, \qquad G_{n,\zeta}
  \in \CQ_{n,\zeta},
\end{equation}
such that
\begin{equation} \label{eq:fnmunEq}
  g_n (\alpha) = \mu^{n-1}_\alpha, \qquad |\alpha| \le n-1.  
\end{equation}
Expanding $G_{n,\zeta}$ with respect to the basis $Q_{n,\zeta}$ of $\CQ_{n,\zeta}$, we
note that
$$
g_n (\alpha) = \sum_{\zeta \in Z_n} \Big(\sum_{Q \in \CQ_{n,\zeta}} c_Q Q
(\alpha) \Big) \, \zeta^\alpha
= \sum_{\zeta \in Z_n} \sum_{Q \in \CQ_{n,\zeta}} c_Q \left( Q
(\Theta) (\cdot)^\alpha \right) (\zeta),
$$
hence the coefficients $c_Q$, when \eqref{eq:fnmunEq} is satisfied, are obtained by solving 
the linear system
$$
\left[ \mu^{n-1}_\alpha : |\alpha| \le n-1 \right] =
V_{\Theta,n}^T \left[ c_Q : Q \in Q_{n,\zeta}, \zeta \in Z_n \right].
$$
Since $V_{\Theta,n}$ is square and nonsingular, the coefficients $c_Q$, therefore $g_n$, are 
uniquely defined. Thus, once $\mu^{n-1}$ is present, the exponential
polynomial $g_n$ that satisfies \eqref{eq:fnmunEq} is uniquely
determined by $\mu^{n-1}$.

We are now ready to define the moment sequence $\mu^n$, which is simply
$$
\mu^n_\alpha = g_n (\alpha), \qquad \alpha \in \NN_0^d.
$$
The polynomials $\FF_n$ generate the so-called \emph{Prony ideal} for
the sequence $\mu^n$ and the associated Hankel matrices
$$
M^n_k := \left[ \mu^n_{\alpha+\beta} :
  \begin{array}{c}
    |\alpha| \le k \\ |\beta| \le k \\
  \end{array}
\right]
$$
can be factored into
$$
M^n_k = V_{\Theta,n,k}^T T_n V_{\Theta,n,k}, \qquad
V_{\Theta,n,k} := \left[ \left( Q(\Theta) (\cdot)^\alpha \right) (\zeta) :
  \begin{array}{c}
    Q \in Q_{n,\zeta}, \, \zeta \in Z_n \\
    |\alpha| \le k
  \end{array}
\right],
$$
where $T_n \in \RR^{r_{n-1} \times r_{n-1}}$ is a nonsingular block
diagonal matrix with blocks of the size $\dim \CQ_\zeta$, $\zeta \in
Z_n$, see \cite{Sauer2017:_Reconstruction}.
In particular, $\det M^n_{n-1} \neq 0$, i.e., $\mu^n$ is
definite up to degree $n-1$ and $M^n_n \wh \Fb_n
= 0$. In the same way, the infinite Hankel matrix $M^n$ factorizes into
$$
M^n = V_{\Theta,n,\infty}^T T_n V_{\Theta,n,\infty}, \qquad
V_{\Theta,n,\infty} := \left[ \left( Q(\Theta) (\cdot)^\alpha \right) (\zeta) :
  \begin{array}{c}
    Q \in Q_{n,\zeta}, \, \zeta \in Z_n \\
    |\alpha| \in \NN_0^d
  \end{array}
\right],
$$
hence the associated Hankel operator has rank $r_{n-1}$.

\begin{thm}\label{thm:MomentCoinc}
  For $n \ge 1$, the moments $\mu^n$ and $\mu$ from above satisfy
  \begin{equation}
    \label{eq:MomentCoinc}
    \mu_\alpha = \mu^n_\alpha, \qquad |\alpha| \le 2n-1.
  \end{equation}
\end{thm}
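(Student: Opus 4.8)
The plan is to prove \eqref{eq:MomentCoinc} by induction on $n$, using at each step the defining property that $g_n$ interpolates $\mu^{n-1}$ on $\{|\alpha|\le n-1\}$ together with the orthogonality relations \eqref{eq:FFnOrthogonal} for $\FF_n$ and the fact that $\FF_n$ is an H-basis. First I would record the base case $n=1$: here $\FF_0=1$, $\mu^0\equiv 1$, and $g_1(x)=\sum_{\zeta\in Z_1}G_{1,\zeta}(x)\zeta^x$ with $g_1(0)=\mu^0_0=1$; since $r_0=1$ the set $Z_1$ is a single point and one checks directly that $\mu^1_\alpha=g_1(\alpha)$ agrees with $\mu_\alpha=\CL((\cdot)^\alpha)$ for $|\alpha|\le 1$, which is the content of \eqref{eq:MomentCoinc} for $n=1$. (In fact $Z_1$ is the single common zero of the $r_0^0=1$ linear polynomial $\FF_1$, and both $\mu$ and $\mu^1$ are forced by $\mu_0=1$ and the two first-order relations coming from \eqref{eq:CFThreeTerm} with $n=0$.)

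For the inductive step, assume $\mu_\alpha=\mu^{n-1}_\alpha$ for $|\alpha|\le 2n-3$; I want to upgrade this to $|\alpha|\le 2n-1$ for $\mu^n$. The key observation is that $\mu$ itself, being the moment sequence of $\CL$ with respect to which $\FF_n$ is orthogonal and an H-basis, satisfies $M_n\wh\Fb_n=0$ — equivalently $\CL((\cdot)^\alpha \FF_n)=0$ for $|\alpha|\le n$, which is exactly the H-basis/orthogonality statement combined with the rank condition. The sequence $\mu^n$ by construction satisfies the \emph{same} linear relations $M^n_n\wh\Fb_n=0$, because $\FF_n$ generates the Prony ideal of $\mu^n$ (this is the factorization $M^n_k=V_{\Theta,n,k}^T T_n V_{\Theta,n,k}$ together with the annihilation identity displayed just before the theorem). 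So both $\mu$ and $\mu^n$ are ``Hankel extensions'' of their common truncation that are killed by $\wh\Fb_n$. The strategy is then: (1) show $\mu^n_\alpha=\mu^{n-1}_\alpha$ for $|\alpha|\le n-1$ — immediate from $g_n(\alpha)=\mu^{n-1}_\alpha$ on that range and $\mu^n_\alpha=g_n(\alpha)$ — hence by the induction hypothesis $\mu^n_\alpha=\mu_\alpha$ for $|\alpha|\le n-1$; (2) propagate this equality up to degree $2n-1$ using the recurrence/annihilation relations to express any moment of degree between $n$ and $2n-1$ in terms of moments of strictly lower degree, via the fact that multiplication by $x_i$ on $\Pi_{n-1}^d$ modulo $\langle\FF_n\rangle$ is governed by the $\sM_{n,i}$ (equivalently, $\FF_n$ reduces every monomial $x^\alpha$ with $n\le|\alpha|\le 2n-1$ to a polynomial of degree $\le n-1$, with coefficients that are the \emph{same} for $\mu$ and $\mu^n$ because they depend only on the matrices $A,B,C$).

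Concretely, for $n\le|\alpha|\le 2n-1$ write $x^\alpha=x^\beta x^\gamma$ with $|\beta|=n$, $|\gamma|=|\alpha|-n\le n-1$. Because $\FF_n$ is an H-basis with interpolation space $\Pi_{n-1}^d$, there is a representation $x^\beta = \FF_n^\Tr q_\beta + R_\beta$ with $q_\beta$ a constant (entries in $\RR^{r_n^0}$) and $R_\beta\in\Pi_{n-1}^d$; multiplying by $x^\gamma$ and reducing $x^\gamma\FF_n$ via \eqref{eq:CFThreeTerm} repeatedly lowers degrees, so that for any moment functional annihilated by $\wh\Fb_n$ (hence by $\langle\FF_n\rangle$ up to degree $2n-1$, using the H-basis property \eqref{eq:HBasis}) we get $\mu_\alpha = (\text{linear combination, with coefficients built only from }A,B,C,\text{ of }\mu_\delta\text{ with }|\delta|\le 2n-3)$; the very same identity holds with $\mu$ replaced by $\mu^n$. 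The main obstacle, and the step I would be most careful about, is exactly this one: verifying that reducing $x^\gamma\FF_n$ (where $x^\gamma$ has degree up to $n-1$) stays inside the ideal and produces only moments of degree $\le 2n-3$ — this requires the H-basis property so that no degree inflation occurs during the reduction, and it requires checking that $\mu$ kills not just $\FF_n$ but $x^\gamma\FF_n$ for $|\gamma|\le n-1$, i.e. that $\CL(\Pi_{n-1}^d\cdot\FF_n)=0$, which is precisely orthogonality \eqref{eq:FFnOrthogonal}. Once both $\mu$ and $\mu^n$ satisfy the identical reduction identities and agree up to degree $n-1$, a downward-then-upward induction on $|\alpha|$ inside the band $n\le|\alpha|\le 2n-1$ gives $\mu_\alpha=\mu^n_\alpha$ there, completing the inductive step and the proof.
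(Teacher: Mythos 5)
Your argument is correct and is essentially the paper's own proof in different clothing: both rest on the facts that $\mu$ and $\mu^n$ agree up to degree $n-1$, that both Hankel matrices annihilate $\wh\Fb_n$ (orthogonality on one side, the Prony-ideal construction on the other), and that invertibility of the leading block $\wh F_{n,n}$ lets one solve for the degree-$|\alpha|$ entries of the difference $\omega=\mu-\mu^n$ in terms of strictly lower-degree ones --- the paper runs this as the block-matrix recursion $W_n^k=-W_{n-1}^k\,\wh F_{n,n-1}\wh F_{n,n}^{-1}$, which is exactly your reduction $x^\beta=\FF_n^\Tr q_\beta+R_\beta$ written in matrix form (and only the invertibility of the leading block is needed there, not the full H-basis property). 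The one small slip is the claim that the reduction yields moments of degree $\le 2n-3$: reducing $x^\alpha=x^\gamma x^\beta$ produces moments of degree up to $|\gamma|+n-1=|\alpha|-1$, i.e.\ up to $2n-2$, but your upward induction on $|\alpha|$ through the band $n\le|\alpha|\le 2n-1$ absorbs this without further change.
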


\begin{proof}
  For $n \ge 1$ we have that
    $\mu^n_\alpha = \mu^{n-1}_{\alpha} = \mu_\alpha$
  for $|\alpha| \le n-1$; for $n = 1$ this is the fact that $\mu^0_0 =
  \mu_0 = 1$ and for $n \ge 2$ it immediately follows by induction.
  We now set $\omega_\alpha
  := \mu_\alpha - \mu^\alpha$, $\alpha \in \NN_0^d$, hence
  $\omega_\alpha = 0$, $|\alpha| \le n-1$. We partition the
  coefficients of $\FF_n$ as
  $$
  \wh \Fb_n = \left[
    \begin{array}{c}
      \wh F_{n,n-1} \\ \wh F_{n,n}
    \end{array}
  \right], \qquad \wh F_{n,n-1} \in \RR^{r_n^0 \times r_{n-1}}, \,
  F_{n,n} \in \RR^{r_n^0 \times r_n^0},
  $$
  with $\det \Fb_{n,n} \neq 0$ and note that due to the orthogonality
  of $\FF_n$ we have $M_n \wh \Fb_n = 0$, hence
  $$
  0 = \left( M_n - M_n^n \right) \wh \Fb_n = W_n \wh \Fb_n = \left[
    \begin{array}{cc}
      W_{n-1}^0 & W_n^0 \\
      \vdots & \vdots \\
      W_{n-1}^n & W_n^n
    \end{array}
  \right] \left[
    \begin{array}{c}
      \wh \Fb_{n,n-1} \\ \wh \Fb_{n,n}
    \end{array}
  \right],
  $$
  where
  $$
  W_{n-1}^k = \left[ \omega^n_{\alpha+\beta} :
    \begin{array}{c}
      |\alpha| = k \\ |\beta| \le n-1 \\
    \end{array}
  \right], \quad W_{n}^k = \left[ \omega^n_{\alpha+\beta} :
    \begin{array}{c}
      |\alpha| = k \\ |\beta| = n \\
    \end{array}
  \right],
  $$
  which yields that
  $$
  W_n^k = - W_{n-1}^k \wh \Fb_{n,n-1} \wh \Fb_{n,n}^{-1}.
  $$
  Then $W_{n-1}^0 = 0$ implies that $W_n^1 = 0$ and since $W^{k+1}_{n-1} =
  \left[ W^k_{n-1} \, W_n^k \right]$, a simple induction on $k$ yields
  that
  $$
  0 = W_{n-1}^n = \left[ \omega^n_{\alpha+\beta} :
    \begin{array}{c}
      |\alpha| = n \\ |\beta| \le n-1 \\
    \end{array}
  \right].
  $$
  Since any $\alpha$ with $|\alpha| \le 2n-1$ can be written as
  $\beta + \gamma$, $|\beta| \le n$, $|\gamma| \le n-1$, the claim follows.
\end{proof}

At this point, it is worthwhile to remark that the above construction
inductively computes the initial segments $\mu^n$ of $\mu$ without
knowing $\mu$ at any point of this process. This holds in accordance
with the nature of 
continued fractions.

In the case that the Gaussian cubature rules exist, the moment sequence $\mu^n$ that satisfies
\eqref{eq:MomentCoinc} has a simple formulation. Indeed, the existence of the Gaussian cubature 
rule means that all zeros in $Z_n$ are simple, so that $\dim \CQ_{n,\zeta} =1$ and, consequently, 
$G_{n,\zeta}$ are scalars. Moreover, they are exactly equal to the coefficients in the Gaussian cubature 
rules, which are $K_n(\zeta,\zeta)^{-1}$, with $K_n$ being the reproducing kernel defined in
\eqref{eq:reprod-kernel}. Indeed, let $g_n(x)= \sum_{\zeta \in Z_n} \frac{1}{K_n(\zeta,\zeta)} \zeta^x$. 
Then, the Gaussian cubature rule \eqref{eq:CF2} implies immediately that \eqref{eq:fnmunEq} holds for
every positive integer $n$. We state the result as a corollary. 
\begin{cor}\label{cor:4.2}
Let $\CL(f) = \int_\Omega f(x) w(x) \d x$ for a weight function defined on $\Omega \subset \RR^d$, for 
which the Gaussian cubature rule \eqref{eq:CF2} holds. Then the moment $\mu^n$ in Theorem 
\ref{thm:MomentCoinc} is precisely given by 
$$
  \mu_\a^n = \sum_{\zeta \in Z_n} \frac{1}{K_n(\zeta,\zeta)} \zeta^\a, \qquad \a \in \NN_0^d. 
$$
\end{cor}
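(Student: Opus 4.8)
The plan is to note that, under the cubature hypothesis, the exponential polynomial $g_n$ from \eqref{eq:ExpoPoly} is forced into a simple scalar form, then to write down the obvious candidate and verify that it satisfies the defining conditions \eqref{eq:fnmunEq}; the identification is then immediate by uniqueness. First I would record the structural simplification: since the Gaussian cubature rule \eqref{eq:CF2} is assumed to hold, the set $Z_n$ consists of $r_{n-1}$ common zeros of $\PP_n$, each with $K_n(\zeta,\zeta)\ne 0$ so that the weights in \eqref{eq:CF2} are well defined; by the proposition asserting that a common zero $\zeta$ of $\PP_n$ with $K_n(\zeta,\zeta)\ne 0$ is simple and not a zero of $\PP_{n+1}$, every $\zeta\in Z_n$ is a simple common zero, hence its $D$-invariant multiplicity space is one-dimensional, $\CQ_{n,\zeta}=\CC$. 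Consequently the sum in \eqref{eq:ExpoPoly} collapses to $g_n(x)=\sum_{\zeta\in Z_n}G_{n,\zeta}\,\zeta^x$ with \emph{scalar} coefficients $G_{n,\zeta}\in\CC$; and, as already noted after \eqref{eq:fnmunEq}, since the $\Theta$-Vandermonde matrix $V_{\Theta,n}$ is square and nonsingular, such a $g_n$ is \emph{uniquely} determined by the requirement $g_n(\alpha)=\mu^{n-1}_\alpha$, $|\alpha|\le n-1$.

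Next I would introduce the candidate $\widetilde g_n(x):=\sum_{\zeta\in Z_n}K_n(\zeta,\zeta)^{-1}\zeta^x$ and verify its interpolation conditions. By \thmref{thm:MomentCoinc} applied at index $n-1$ (with the convention $\mu^0\equiv 1$ handling the base case $n=1$, where $\mu^0_0=\mu_0=1$), we have $\mu^{n-1}_\alpha=\mu_\alpha$ for $|\alpha|\le n-1$. On the other hand, inserting the monomial $f=x^\alpha$ with $|\alpha|\le n-1$, hence $f\in\Pi_{2n-1}^d$, into \eqref{eq:CF2} yields
$$
  \mu_\alpha=\int_\Omega x^\alpha w(x)\,\d x=\sum_{\zeta\in Z_n}\frac{1}{K_n(\zeta,\zeta)}\,\zeta^\alpha=\widetilde g_n(\alpha),\qquad |\alpha|\le n-1.
$$
Combining the two identities gives $\widetilde g_n(\alpha)=\mu^{n-1}_\alpha$ for $|\alpha|\le n-1$, so $\widetilde g_n$ satisfies exactly the conditions that determine $g_n$. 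By the uniqueness recorded above, $g_n=\widetilde g_n$, and therefore $\mu^n_\alpha=g_n(\alpha)=\sum_{\zeta\in Z_n}K_n(\zeta,\zeta)^{-1}\zeta^\alpha$ for every $\alpha\in\NN_0^d$, which is the assertion.

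I expect no real obstacle here. The only points that need care are: invoking the earlier proposition to secure the simplicity of every $\zeta\in Z_n$, which is what makes $g_n$ have scalar coefficients and lets the $|Z_n|$-point Vandermonde uniqueness apply; keeping the two exactness ranges distinct — only degree $\le n-1$ is used to match \eqref{eq:fnmunEq}, whereas it is the full degree-$(2n-1)$ exactness of \eqref{eq:CF2} that makes the whole construction consistent with \thmref{thm:MomentCoinc}; and treating $n=1$ as a separate base case so as not to rely on an empty induction for the identity $\mu^{n-1}=\mu$ in low degrees.
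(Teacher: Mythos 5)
Your argument is correct and is essentially the paper's own: the paper likewise notes that the cubature hypothesis forces all $\zeta\in Z_n$ to be simple so that $G_{n,\zeta}$ are scalars, proposes the candidate $g_n(x)=\sum_{\zeta\in Z_n}K_n(\zeta,\zeta)^{-1}\zeta^x$, and observes that \eqref{eq:CF2} makes \eqref{eq:fnmunEq} hold, whence uniqueness of the interpolant gives the formula. You merely spell out the step the paper calls ``immediate,'' namely combining $\mu^{n-1}_\alpha=\mu_\alpha$ for $|\alpha|\le n-1$ (from Theorem~\ref{thm:MomentCoinc} at the previous index, with the $n=1$ base case) with exactness of the cubature on monomials of degree at most $n-1$.
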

We need the Theorem \ref{thm:MomentCoinc} in its full generality for the characterization 
of rational approximation for the Laurent series in the last subsection.

\subsection{Rational approximation to Laurent series}
\label{sec:RatFun}

Continued fractions of univariate polynomials have rational functions
as convergents. The same holds for our three-term expansions 
here, provided the recurrence matrices satisfy \eqref{eq:comm} and
\eqref{eq:commJn} or \eqref{eq:CommMultGen} and \eqref{eq:commJnGen},
respectively. To that end, we use the exponential polynomial $g_n$
\eqref{eq:ExpoPoly} from which $\mu^n$ is generated as $\mu^n_\alpha =
g_n (\alpha)$, and consider the formal Laurent series
\begin{equation} \label{eq:LaurenSeries}
\mu^n (z) := \sum_{\alpha \in \NN_0^d} \mu_\alpha^n \, z^{-\alpha}, \qquad
z \in (\CC \setminus 0)^d.
\end{equation}
It has already been shown by Power \cite{Power} that the Laurent series
associated with any finite rank Hankel operator induces a rational
Laurent series. In addition, one can even derive an explicit
representation of this rational function from \eqref{eq:ExpoPoly}. Recall
that $G_{n,\zeta}$ are coefficients of $g_n(x)$ and $\Theta$ is the theta
operator. 

\begin{prop}\label{prop:RatFunExplicit}
For the sequences $\mu^n$ given as $\mu^n_\alpha = g_n (\alpha)$, we have
  \begin{equation}\label{eq:muRatForm}
    \mu^n (z) = \sum_{\zeta \in Z_n} \left( G_{n,\zeta} (-\Theta)
      \frac{(\cdot)^\epsilon}{\left( (\cdot) - \zeta \right)^\epsilon}
    \right) (z), \qquad 
    \epsilon := (1,\dots,1).
  \end{equation}
\end{prop}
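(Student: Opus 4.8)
The plan is to reduce the multivariate identity to the well-understood one-dimensional generating function for the $\theta$-operator and then assemble the pieces over the points $\zeta\in Z_n$. First I would recall the elementary univariate fact: for a single variable $t$ and $\zeta\neq 0$, the formal Laurent series $\sum_{k\ge 0}\zeta^k t^{-k}$ equals $\tfrac{t}{t-\zeta}$ (geometric series in $t^{-1}$, convergent as a formal series since we expand in powers of $t^{-1}$), and more generally, applying the operator $\theta=(\cdot)\tfrac{\partial}{\partial t}$ repeatedly, $\sum_{k\ge 0}k^{m}\zeta^{k}t^{-k} = \bigl(\theta^{m}\tfrac{t}{t-\zeta}\bigr)$ — because $\theta$ acting on $t^{-k}$ produces $-k\,t^{-k}$, so $\theta$ acting on the series in the $t$-variable is the same as multiplication by $-k$ termwise; hence $(-\theta)^m$ pulls down the factor $k^m$. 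Tensoring over the $d$ coordinates gives, for $Q\in\Pi^d$ and $\zeta\in(\CC\setminus 0)^d$,
\[
\sum_{\alpha\in\NN_0^d} Q(\alpha)\,\zeta^{\alpha}\,z^{-\alpha}
= \Bigl(Q(-\Theta)\,\frac{(\cdot)^{\epsilon}}{((\cdot)-\zeta)^{\epsilon}}\Bigr)(z),
\qquad \epsilon=(1,\dots,1),
\]
where $\tfrac{(\cdot)^{\epsilon}}{((\cdot)-\zeta)^{\epsilon}} = \prod_{j=1}^{d}\tfrac{z_j}{z_j-\zeta_j}$, all identities understood in the ring of formal Laurent series in $z^{-1}$.

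Next I would substitute the definition of $\mu^n$. By \eqref{eq:ExpoPoly} we have $g_n(x)=\sum_{\zeta\in Z_n}G_{n,\zeta}(x)\,\zeta^x$ with $G_{n,\zeta}\in\CQ_{n,\zeta}$ a polynomial, and $\mu^n_\alpha=g_n(\alpha)$. Plugging this into \eqref{eq:LaurenSeries} and interchanging the (finite) sum over $\zeta\in Z_n$ with the formal summation over $\alpha$,
\[
\mu^n(z)=\sum_{\alpha\in\NN_0^d}\Bigl(\sum_{\zeta\in Z_n}G_{n,\zeta}(\alpha)\,\zeta^{\alpha}\Bigr)z^{-\alpha}
=\sum_{\zeta\in Z_n}\sum_{\alpha\in\NN_0^d}G_{n,\zeta}(\alpha)\,\zeta^{\alpha}\,z^{-\alpha}.
\]
Applying the displayed generating-function identity with $Q=G_{n,\zeta}$ to each inner sum yields exactly \eqref{eq:muRatForm}. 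The interchange of sums is legitimate because $Z_n$ is finite and everything lives in the formal Laurent series ring, so no analytic convergence issue arises; I would state this explicitly to forestall objections.

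The only genuinely delicate point is the verification of the single-variable $\theta$-generating-function identity in the precise form $\sum_{k\ge 0}Q(k)\zeta^k t^{-k}=(Q(-\theta)\tfrac{t}{t-\zeta})(t)$ for an arbitrary polynomial $Q$, and checking that it tensorizes correctly — i.e., that $\Theta^\beta=\theta_1^{\beta_1}\cdots\theta_d^{\beta_d}$ acting coordinatewise on the product $\prod_j (z_j^{-k_j})$ reproduces $(-1)^{|\beta|}k^\beta$ with $k^\beta=\prod_j k_j^{\beta_j}$. This is a short induction on $\deg Q$ (or on each $\beta_j$), using $\theta(t^{-k})=-k\,t^{-k}$ and linearity; the main obstacle is bookkeeping of signs, namely tracking why it is $(-\Theta)$ and not $\Theta$ that appears, which is precisely the sign picked up each time $\theta$ hits $t^{-k}$. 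Once this lemma-level computation is in hand, the rest is formal. I would therefore organize the proof as: (1) the univariate $\theta$-identity with its one-line inductive proof; (2) the tensor product statement over $d$ coordinates; (3) substitution of \eqref{eq:ExpoPoly}, the finite interchange of summations, and application of (2) to conclude \eqref{eq:muRatForm}. Since the paper has already invoked \cite{Power} and \cite{Sauer2017:_Reconstruction} for the rationality and the factorization of $M^n$, I would cite those only for context and keep this proof self-contained at the level of the generating-function manipulation.
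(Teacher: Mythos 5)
Your proposal is correct and follows essentially the same route as the paper's proof: both rest on the identity $Q(-\Theta)\,z^{-\alpha}=Q(\alpha)\,z^{-\alpha}$, the multivariate geometric series $\sum_{\alpha}\zeta^{\alpha}z^{-\alpha}=z^{\epsilon}/(z-\zeta)^{\epsilon}$, and the interchange of the finite sum over $\zeta\in Z_n$ with the formal sum over $\alpha$, the paper merely compressing this into a single chain of equalities rather than isolating the univariate lemma. The only blemish is the sign slip in your first display, where $\theta^{m}$ should read $(-\theta)^{m}$, consistent with your own subsequent correction.
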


\begin{proof}
  By \eqref{eq:ExpoPoly}, we have that
  \begin{align*}
    \mu^n (z)
    &= \sum_{\alpha \in \NN_0^d} \sum_{\zeta \in Z_n} G_{n,\zeta}
      (\alpha) z^{-\alpha} \, \zeta^\alpha
      = \sum_{\alpha \in \NN_0^d} \sum_{\zeta \in Z_n} \left( G_{n,\zeta} (-\Theta)
      \left( \zeta \cdot (\cdot)^{-\epsilon} \right)^{\alpha} \right)
      (z) \\
    &= \sum_{\zeta \in Z_n} \left( G_{n,\zeta} (-\Theta)
      \sum_{\alpha \in \NN_0^d} \left( \zeta \cdot (\cdot)^{-\epsilon}
      \right)^{\alpha} \right) (z)
      = \sum_{\zeta \in Z_n} \left( G_{n,\zeta} (-\Theta) \frac{1}{1 - \zeta
      \cdot (\cdot)^{-\epsilon}} \right) (z) \\
    &= \sum_{\zeta \in Z_n} \left( G_{n,\zeta} (-\Theta)
      \frac{(\cdot)^\epsilon}{\left( (\cdot)
      - \zeta \right)^\epsilon} \right) (z),
  \end{align*}
  which is \eqref{eq:muRatForm}.
\end{proof}

In the case of simple zeros, or the existence of Gaussian cubature rules, we deduce from Corollary \ref{cor:4.2}
the following: 
\begin{cor} \label{cor:rational_mu}
Under the assumption of Corollary \ref{cor:4.2}, the rational function $\mu_n(z)$ in \eqref{eq:muRatForm}
is given by 
\begin{equation}\label{eq:rational_mu}
\mu^n (z) = \sum_{\zeta \in Z_n} \frac{1}{K_n(\zeta,\zeta)} \frac{z^\epsilon}{(z- \zeta)^\epsilon}
= \sum_{\zeta \in Z_n}  \frac{1}{K_n(\zeta,\zeta)} \frac{z_1 \cdots z_d}{(z_1-\zeta_1) \cdots (z_d - \zeta_d)}.
\end{equation}
\end{cor}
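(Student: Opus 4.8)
The plan is to specialize Proposition~\ref{prop:RatFunExplicit} using the scalar description of the coefficients $G_{n,\zeta}$ that comes out of Corollary~\ref{cor:4.2}. First I would invoke the standing hypothesis of Corollary~\ref{cor:4.2}: since the Gaussian cubature rule \eqref{eq:CF2} holds, all common zeros of $\FF_n$ (equivalently of $\PP_n$) are simple, hence $\dim \CQ_{n,\zeta} = 1$ for every $\zeta \in Z_n$. Because each $\CQ_{n,\zeta}$ is $D$-invariant it contains the constant polynomial $1$, and being one-dimensional it must therefore equal $\CC$; consequently the polynomial coefficient $G_{n,\zeta} \in \CQ_{n,\zeta}$ appearing in \eqref{eq:ExpoPoly} is a constant.

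Next I would pin down which constant. By Corollary~\ref{cor:4.2} the exponential polynomial $g_n$ satisfying \eqref{eq:fnmunEq} is $g_n(x) = \sum_{\zeta\in Z_n} K_n(\zeta,\zeta)^{-1}\,\zeta^x$ — indeed, the Gaussian cubature rule \eqref{eq:CF2} shows that this particular $g_n$ meets \eqref{eq:fnmunEq}, and $g_n$ is unique because $V_{\Theta,n}$ is square and nonsingular so the expansion coefficients $c_Q$ are uniquely determined. Comparing with the general form $g_n(x) = \sum_{\zeta\in Z_n} G_{n,\zeta}(x)\,\zeta^x$ of \eqref{eq:ExpoPoly}, I conclude $G_{n,\zeta} \equiv K_n(\zeta,\zeta)^{-1}$ for each $\zeta \in Z_n$.

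Finally I would substitute into \eqref{eq:muRatForm}. Since $G_{n,\zeta}$ is the constant $K_n(\zeta,\zeta)^{-1}$, the operator $G_{n,\zeta}(-\Theta)$ is just multiplication by that scalar (the constant polynomial $1$ induces the identity operator in the $\Theta$-calculus, so no derivatives of $(\cdot)^\epsilon/((\cdot)-\zeta)^\epsilon$ occur), which gives
\[
\mu^n(z) = \sum_{\zeta\in Z_n} \frac{1}{K_n(\zeta,\zeta)}\, \frac{(\cdot)^\epsilon}{((\cdot)-\zeta)^\epsilon}(z) = \sum_{\zeta\in Z_n}\frac{1}{K_n(\zeta,\zeta)}\,\frac{z^\epsilon}{(z-\zeta)^\epsilon};
\]
the second displayed equality in the statement is then merely the coordinatewise reading $z^\epsilon = z_1\cdots z_d$, $(z-\zeta)^\epsilon = (z_1-\zeta_1)\cdots(z_d-\zeta_d)$ coming from $\epsilon = (1,\dots,1)$. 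This argument is a bookkeeping specialization of Proposition~\ref{prop:RatFunExplicit} and I do not foresee a genuine obstacle; the only point requiring a little care is the chain $\dim\CQ_{n,\zeta}=1 \Rightarrow \CQ_{n,\zeta}=\CC \Rightarrow G_{n,\zeta}$ constant, together with the observation that $G(-\Theta)$ is scalar multiplication when $G$ is constant.
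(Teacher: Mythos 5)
Your proposal is correct and follows essentially the same route the paper intends: the existence of the Gaussian cubature rule forces the common zeros to be simple, so $\dim\CQ_{n,\zeta}=1$ and $G_{n,\zeta}$ is the constant $K_n(\zeta,\zeta)^{-1}$ by Corollary~\ref{cor:4.2}, after which \eqref{eq:muRatForm} reduces to \eqref{eq:rational_mu} since $G_{n,\zeta}(-\Theta)$ acts as scalar multiplication. The paper treats this as an immediate deduction and gives no separate proof; your only addition is spelling out the uniqueness of $g_n$ via the nonsingularity of $V_{\Theta,n}$, which is a correct and harmless elaboration.
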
 

This shows that the rational functions coming from moment sequences
or finite rank Hankel operators have some special properties: they
have finitely many poles of order $d$ and their denominator
polynomials can be factored into simple linear factors. And obviously
the denominators are related to the ideal $\left\langle \FF_n
\right\rangle$ by the fact that the denominator is the product of the
common zeros of this ideal.

Let us consider the Laurent series and its rational approximants for
the examples in Subsection \ref{sect:3.4}
which admit the Gaussian cubature rules. Instead of dealing with integrals over the domain $\Omega$ there,
we work with integrals over $\RR^d$ using the mapping \eqref{eq:symm-map}.
 
Let $w$ be a weight function defined on $[-1,1]$ and assume that it
admits a Gaussian quadrature formula. Let
$$
  \CL_{-\f12}(f) = \frac1{d!} \int_{\RR^d} f(e_1(x),\ldots, e_d(x)) \prod_{i=1}^d w_i(x) \d x
$$
and 
$$
  \CL_{\f12} (f) = \frac1{d!} \int_{\RR^d} f(e_1(x),\ldots, e_d(x)) \prod_{1\le i< j \le d} (x_i-x_j)^2 
     \prod_{i=1}^d w_i(x) \d x,
$$
where $e_k(x_1,\ldots,x_d)$ is the $k$-th elementary symmetric function defined in \eqref{eq:symm-map}. 
For the moment sequence $\{ \CL_{\pm \f12}(x^\a) \}$, the Laurent series \eqref{eq:LaurenSeries} can be 
easily seen to satisfy 
$$
    \mu_{-\f12}(z) = \frac{1}{d!} \int_{\RR^d} \frac{z_1\cdots z_d}{(z_1 - e_1(x)) \cdots (z_d - e_d(x))} \prod_{i=1}^d w_i(x) \d x
$$
and 
$$
    \mu_{\f12}(z) = \frac{1}{d!} \int_{\RR^d} \frac{z_1\cdots z_d}{(z_1- e_1(x)) \cdots (z_d-e_d(x))} 
     \prod_{1\le i< j \le d} (x_i-x_j)^2  \prod_{i=1}^d w_i(x) \d x.
$$
By \eqref{eq:GaussEx1} and \eqref{eq:GaussEx2}, the corresponding rational functions that approximate 
$\mu_{\pm \f12}(z)$, as in \eqref{eq:rational_mu}, are given by  
\begin{equation} \label{eq:rationaEx1}
  \mu_{-\f12}^n(z) = \sum_{\g_1=1}^n \sum_{\g_2=1}^{\g_1} \cdots 
  \sum_{\g_d=1}^{\g_{d-1}} \Lambda_{\g,n}^{(-\f12)} \frac{z_1\cdots z_d} 
     {(z_1 - u_{\g_1,n})\cdots (z_d - u_{\g_d,n}) }
\end{equation}
and 
\begin{equation} \label{eq:rationalEx2}
  \mu_{\f12}^n(z) =  \sum_{\g_1=1}^{n+d-1} \sum_{\g_2=1}^{\g_1-1} \cdots 
  \sum_{\g_d=1}^{\g_{d-1}-1} \Lambda_{\g,n}^{(\f12)}  \frac{z_1\cdots z_d} 
     {(z_1 - u_{\g_1,n+d-1})\cdots (z_d - u_{\g_d,n+d-1})},
\end{equation}
where $u_{\g,n}$ and $u_{\g,n+d-1}$ are nodes of the corresponding Gaussian cubature rules and 
$\Lambda_{\g,n}^{(\pm \f12)}$ are given by \eqref{eq:coeffGaussEx}. Evidently, $\mu_{\pm \f12}^n(z)$
are rational functions and, by Theorem \ref{thm:MomentCoinc}, they satisfy 
$$
  \mu_{\pm \f12}(z) - \mu_{\pm \f12}^n(z) = O(|z|^{2n}).
$$

\subsection{Continued fraction expansions of moment sequences}
\label{sec:MomentCFExp}

We can now assemble the theory of the preceding sections to give a
characterization of moment sequences that admit an associated
multivariate continued fraction expansion.

To that end, we start with a sequence $\mu = \left( \mu_\alpha :
  \alpha \in \NN_0 \right)$ such that $\det M_n \neq 0$, $n \ge 0$. Define
$$
\mu (z) = \sum_{\a\in \NN_0^d} \mu_\alpha z^{-\alpha}, \qquad z \in z \in (\CC \setminus 0)^d.
$$  
As shown in Proposition~\ref{prop:SignOrthognal}, there exists a
sign-orthonormal basis $\PP_n$, $n \ge 0$, of $\Pi^d$ that satisfies
the recurrence relation \eqref{eq:3term} and the rank condition
\eqref{eq:rank-cond}. Moreover, the commutativity conditions
\eqref{eq:comm} hold for the recurrence matrices. If, in addition, the
commuting conditions \eqref{eq:commJn} are satisfied for $n \ge 0$,
then the polynomials $\PP_n$ are H-bases and there exist sequences
$\mu^n = \left( \mu^n_\alpha : \alpha \in \NN_0 \right)$ such that
$\mu^n_\alpha = \mu_\alpha$, $|\alpha| \le 2n-1$, and therefore
$$
\mu (z) - \mu^n (z) = O \left( z^\alpha : |\alpha| = 2n \right),
$$
where $\mu^n (z)$ is a rational function by \eqref{eq:muRatForm}. This
is the perfect analogy for the continued fraction expansion of a given
Laurent series.

The above procedure can be performed for \emph{any} vector polynomial
sequence $\FF_n \in \Pi_n^{r_n^0}$  given by a three-term recurrence
$$
\FF_{n+1} = A_n (x) \FF_n + B_n \FF_n + C_n \FF_{n-1}, \qquad A_n
(x) = \sum_{i=1}^d A_{n,i} \, x_i
$$
with matrix coefficients as in the recurrence relations of orthogonal polynomials.
Moreover, we let $\CA_n \subset \NN_0^d$ be a lower set, i.e., $\alpha \in \CA$
implies $\beta$ in $\CA$ whenever $\beta \le \alpha$ componentwise that has 
the property that $\{ x^\alpha : \alpha \in \CA_n \}$ spans the quotient space 
$\Pi^d / \left\langle \FF_{n-1} \right\rangle$. With $Z_n$ denoting the set of 
common zeros of $\FF_{n+1}$, we then define $g_n$ of the form \eqref{eq:ExpoPoly} 
by requiring that
\begin{equation}
  \label{eq:GenContFracmuDef}
  g_n (\alpha) = \mu^{n-1}_\alpha, \qquad \alpha \in \CA_n.
\end{equation}
This interpolation problem is uniquely solvable (cf. \cite{Sauer2017:_Reconstruction})
and again yields $\mu^n$ by sampling $g_n$: $\mu^n_\alpha = g_n (\alpha)$, $\alpha \in
\NN_0^d$. Finally, obtain by this procedure
a sequence of rational functions $\mu^n (z)$ whose explicit expression is given
in Proposition~\ref{prop:RatFunExplicit}. Now we can give a formal
definition of continued fractions that coincides with the one from
\cite{Perron,S_CF} for $d=1$. 

\begin{defn}
  \label{D:ContFracDef}
  For matrices $A_{n,i} : r_{n+1}^0 \times r_n^0$, $i=1,\dots,d$, $B_n
  : r_{n+1}^0 \times r_n^0$, $C _n : r_{n+1}^0 \times r_{n-1}$, $n \in
  \NN$, we define the \emph{continued fraction}
  \begin{equation}
    \label{eq:ContFracDef}
    \frac{ \left. C_1 \right|}{\left. A_1 (z) + B_1 \right|} + \cdots
    + \frac{ \left. C_n \right|}{\left. A_n (z) + B_n \right|} :=
    \sum_{\zeta \in Z_n} \left( G_{n,\zeta} (-\Theta)
      \frac{(\cdot)^\epsilon}{\left( (\cdot) - \zeta \right)^\epsilon}
    \right) (z),
  \end{equation}
  where $Z_n$ are the common zeros of $\FF_{n+1}$ and the
  $G_{n,\zeta}$ are determined iteratively by \eqref{eq:GenContFracmuDef}. The
  ideal $\left\langle \FF_{n+1} \right\rangle$ is called the
  \emph{denominator ideal} of the continued fraction.
\end{defn}

The left-hand side of \eqref{eq:ContFracDef} is a symbolic notation for 
the continued fraction in the spirit of the classical notation of one variable. 
In words, with each matrix sequence, the continued fraction expansion 
associates a sequence of rational functions, which is precisely the continued 
fraction expansion of rational functions, exactly like the case for $d=1$. 
Note, however, that the situation is significantly more intricate in several 
variables since $\FF_{n+1}$ may \emph{not} be an H-basis of the 
denominator ideal, and even $Z_n = \emptyset$ is possible. Hence, in 
contrast to the univariate case, we cannot expect these continued 
fractions to exist for all moment functionals. Finally, we find it worthwhile 
to mention that $\left\langle \FF_{n+1} \right\rangle$ is the so-called 
\emph{Prony ideal} for the associated exponential polynomial $g_n$.

We are now ready to give a fundamental definition.

\begin{defn}\label{def:ContinuedF}
 A Laurent series $\mu(z)$ has an \emph{associated continued fraction} 
 with coefficients $A_{n,i}, B_{n}, C_{n}$, $n \in \NN$ if
  $$
  \mu (z) - \left( \frac{ \left. C_1 \right|}{\left. A_1 (z) + B_1
      \right|} + \cdots + \frac{ \left. C_n \right|}{\left. A_n (z) +
        B_n \right|} \right) = O \left(z^\alpha : |\alpha| = 2n \right),
  \qquad n \in \NN.
  $$
\end{defn}

In our construction of signed orthogonal polynomials above, the
Laurent ideals, hence $Z_n$ and 
$\CQ_\zeta$, $\zeta \in Z_n$, depend only on the \emph{ideal} $\left\langle \PP_n
\right\rangle$, hence we can also start with a sequence of orthogonal
polynomials defined by the implicit recursion \eqref{eq:3term}, and
then refer to the explicit formula \eqref{eq:recursiveP}, use the notation
$$
G_n (z) = \sum_{i=1}^d G_{n,i} z_i = \sum_{i=1}^d D_{n,i}^\Tr S_n z_i, 
$$
and consider the continued fractions
\begin{equation}
  \label{eq:ContFraDef}
  \frac{-F_1 |}{| G_1(z) - E_1} + \frac{-F_2 |}{| G_2 (z) - E_2}
  + \cdots \frac{-F_n |}{| G_n (z) - E_2}, \qquad n \in \NN.
\end{equation}
For the continued fraction expansion defined in Definition \ref{def:ContinuedF},
we can now summarize our findings in the following way.

\begin{thm}
A Laurent series $\mu (z) = \sum \mu_\alpha z^{-\alpha}$ has an
  associated continued fraction expansion if and only if
  \begin{enumerate}[(i)]
  \item the Hankel matrices $M_n$ are definite, i.e., $\det M_n \neq
    0$, $n \ge 0$, and
  \item the coefficient matrices of the associated sign-orthonormal
    polynomials satisfy \eqref{eq:comm}, \eqref{eq:rank-cond} and
    \eqref{eq:commJn}. 
  \end{enumerate}
\end{thm}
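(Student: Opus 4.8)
The plan is to prove the equivalence by running the two directions through the machinery already assembled. For the ``only if'' direction, suppose $\mu(z)$ has an associated continued fraction expansion with coefficient matrices $A_{n,i},B_n,C_n$. The convergents $\mu^n(z)$ are, by Definition~\ref{D:ContFracDef} and Proposition~\ref{prop:RatFunExplicit}, rational functions attached to the ideal $\langle\FF_{n+1}\rangle$ for the polynomial sequence generated by the three-term recurrence with these matrices. The defining property $\mu(z)-\mu^n(z)=O(z^\alpha:|\alpha|=2n)$ forces $\mu_\alpha=\mu^n_\alpha$ for $|\alpha|\le 2n-1$. The factorization $M^n_k=V_{\Theta,n,k}^T T_n V_{\Theta,n,k}$ shows that $\det M^n_{n-1}\neq 0$, and since $\mu^n$ agrees with $\mu$ up to degree $2n-1\ge n-1$ this gives $\det M_{n-1}\neq 0$; as $n$ ranges over $\NN$ we obtain $\det M_n\neq 0$ for all $n\ge 0$, i.e.\ (i). Definiteness then yields, via Proposition~\ref{prop:SignOrthognal}, sign-orthonormal polynomials $\PP_n$ satisfying \eqref{eq:3term} and \eqref{eq:rank-cond}, and the commutativity \eqref{eq:comm} is automatic (it is derived from computing $\CL(x_ix_j\PP_n\PP_{n\pm1}^\Tr)$ two ways). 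The remaining point is \eqref{eq:commJn}: the existence of the rational convergent $\mu^n(z)$ with the stated $O$-order requires, by the construction in \S\ref{sec:RecToMoments}, that $\FF_{n+1}$ be an H-basis with $r_n$ common zeros; by Lemma~\ref{lem:HbasComm} and Theorem~\ref{thm:commJn} applied at level $n+1$, this is \eqref{eq:commJn}.

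For the ``if'' direction, assume (i) and (ii). By (i) and Proposition~\ref{prop:SignOrthognal} we have the sign-orthonormal basis $\PP_n$ with three-term relation \eqref{eq:3term}; the functional $\CL$ defined by $\mu$ makes them orthogonal. Condition (ii) includes \eqref{eq:commJn}, so by Theorem~\ref{thm:commJn} each $\PP_{n+1}$ has $r_n$ common zeros and, by Lemma~\ref{lem:HbasComm}, is an H-basis. Hence the hypotheses of \S\ref{sec:RecToMoments} are met and we may run the Prony iteration: starting from $\mu^0_\alpha=1$, we build $g_n$ by \eqref{eq:fnmunEq} and set $\mu^n_\alpha=g_n(\alpha)$. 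Theorem~\ref{thm:MomentCoinc} gives $\mu^n_\alpha=\mu_\alpha$ for $|\alpha|\le 2n-1$, and Proposition~\ref{prop:RatFunExplicit} shows $\mu^n(z)$ is precisely the rational function on the right-hand side of \eqref{eq:ContFracDef} with the $G_{n,\zeta}$ determined by the iteration. Therefore $\mu(z)-\mu^n(z)=\sum_{|\alpha|\ge 2n}(\mu_\alpha-\mu^n_\alpha)z^{-\alpha}=O(z^\alpha:|\alpha|=2n)$ for every $n$, which is exactly the statement that $\mu(z)$ has an associated continued fraction expansion in the sense of Definition~\ref{def:ContinuedF}, with coefficients read off from the recurrence \eqref{eq:3term}–\eqref{eq:recursiveP} (equivalently in the form \eqref{eq:ContFraDef}).

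The main obstacle is the ``only if'' direction's extraction of \eqref{eq:commJn} and the degree bookkeeping: one must argue that the mere existence of a rational convergent $\mu^n(z)$ with the prescribed order of contact already forces $\FF_{n+1}$ to be an H-basis, rather than this being an extra hypothesis. The cleanest route is to observe that if $\mu(z)-\mu^n(z)=O(z^\alpha:|\alpha|=2n)$ then the infinite Hankel operator of $\mu^n$ must agree with that of $\mu$ on a block large enough that its rank is forced to be exactly $r_n$ and its kernel contains $\wh\Fb_{n+1}$; by the rank-$r_n$ factorization of such Hankel operators (the Power/Kronecker correspondence recalled in \S\ref{sec:RecToMoments}), the denominator ideal is $\langle\FF_{n+1}\rangle$ and it must be an H-basis, which is \eqref{eq:commJn} via Lemma~\ref{lem:HbasComm} and Theorem~\ref{thm:commJn}. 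A secondary subtlety is uniqueness/consistency of the coefficient matrices used in Definition~\ref{def:ContinuedF} versus those of the sign-orthonormal recurrence; this is handled by noting that both produce the same ideals $\langle\FF_{n+1}\rangle$, hence the same $Z_n$ and $\CQ_{n,\zeta}$, hence the same convergents, so the choice of $D_{n,i}$ (and thus $E_n,F_n,G_n$) is immaterial to the assertion.
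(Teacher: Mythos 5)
Your proposal is correct and takes essentially the same approach as the paper: the paper gives no separate proof, presenting the theorem as a summary of \S\ref{sec:RecToMoments}--\S\ref{sec:MomentCFExp}, and your ``if'' direction reassembles exactly that chain (definiteness $\Rightarrow$ Proposition~\ref{prop:SignOrthognal} and \eqref{eq:comm}, \eqref{eq:rank-cond}; condition \eqref{eq:commJn} $\Rightarrow$ H-bases via Theorem~\ref{thm:commJn} and Lemma~\ref{lem:HbasComm}; then Theorem~\ref{thm:MomentCoinc} and Proposition~\ref{prop:RatFunExplicit} give the order-$2n$ rational convergents). Your ``only if'' direction supplies detail the paper leaves implicit; note only that under Definition~\ref{D:ContFracDef} the well-definedness of the $G_{n,\zeta}$ already presupposes the poisedness of the interpolation at $Z_n$, so the H-basis property is encoded in the very existence of the convergents rather than needing to be extracted from the order of contact via a Hankel-rank argument.
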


As an example, the rational functions $\mu_{\pm \f12}^n$ given in 
\eqref{eq:rationaEx1} and \eqref{eq:rationalEx2} are the associated 
continued fraction expansions of the Laurent series $\mu_{\pm \f12}(z)$. 

Once more we want to emphasize that the critical condition is 
\eqref{eq:commJn}. Whenever $\det M_n \neq 0$, then the rank condition
\eqref{eq:rank-cond} and the commuting conditions \eqref{eq:comm}
follow automatically, they are needed, however, to ensure that a three
term recurrence yields a well-defined multiplication.

To further understand the meaning of \eqref{eq:commJn}, we consider
the \emph{monic} orthogonal basis $\wh \PP_n$ associated to the
definite sequence $\mu$, where $\wh \Pb_n = \left[
  \begin{array}{c}
    * \\ I_{r_n^0 \times r_n^0}
  \end{array}
\right]$. Here, the matrices $A_{n,i}$, $B_{n,i}$ and $C_{n,i}$ of the
recurrence relation \eqref{eq:MultiplArbit} can be given explicitly
from the moment matrix, cf. \cite{BarrioPenaSauer10}, in particular
$$
A_{n,i} = L_{n,i} =: \sum_{|\alpha| = n} e_\alpha
e_{\alpha+\epsilon_i}^T,
\qquad \text{and} \qquad
C_{n,i} = H_n L_{n-1,i}^T H_{n-1}^{-1},
$$
where
$$
H_n = M_{n,n} - M_{n,n-1}^\Tr M_{n-1}^{-1} M_{n,n-1}, \qquad
M_n = \left[
  \begin{array}{cc}
    M_{n-1} & M_{n,n-1} \\ M_{n,n-1} & M_{n,n}
  \end{array}
\right],
$$
denotes the Schur complement of $M_{n-1}$ in $M_n$, which is
nonsingular since $M_n$ and $M_{n-1}$ are nonsingular. For these
matrices, \eqref{eq:commJnGen} (with $n$ replaced by $n+1$ for
convenience) becomes
$$
L_{n-1,i} H_n L_{n-1,j}^T H_{n-1}^{-1} = L_{n-1,j} H_n L_{n-1,i}^T
H_{n-1}^{-1},
$$
or, equivalently,
\begin{equation*}
  L_{n-1,i} H_n L_{n-1,j}^T = L_{n-1,j} H_n L_{n-1,i}^T.
\end{equation*}
By \cite[Lemma 4.2]{X94}, this identity implies that $H_n$ is a Hankel matrix. that is, 
$(H_n)_{\alpha,\beta} = h_{\alpha+\beta}$. 
Since $M_{n,n}$ is a Hankel matrix, we obtain the following characterization: 

\begin{thm}
A sequence $\mu$ has an associated continued fraction
expansion if and only if it is definite and the matrix 
$$
 M_{n,n-1}^\Tr M_{n-1}^{-1} M_{n,n-1}
$$
is a Hankel matrix for all $n \ge 1$.
\end{thm}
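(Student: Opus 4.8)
The plan is to run a chain of equivalences, under the standing hypothesis that $\mu$ is definite, starting from the characterization of associated continued fraction expansions obtained just above and ending at the stated Hankel condition; since every link will be an ``if and only if'', both implications come out together. First I would invoke that characterization: $\mu$ admits an associated continued fraction expansion if and only if $\det M_n \neq 0$ for all $n$ and the coefficient matrices of the sign-orthonormal basis $\PP_n$ satisfy \eqref{eq:comm}, \eqref{eq:rank-cond} and \eqref{eq:commJn}. As already observed, definiteness of $\mu$ forces \eqref{eq:rank-cond} and \eqref{eq:comm} automatically, so the only surviving requirement is \eqref{eq:commJn} for every $n \ge 1$. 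By Theorem~\ref{thm:commJn}, \eqref{eq:commJn} is equivalent to $\PP_n$ having the maximal number $r_{n-1}$ of common zeros, and by Lemma~\ref{lem:HbasComm} this is in turn equivalent to $\dim\bigl(\Pi^d/\langle \PP_n\rangle\bigr) = r_{n-1}$, a property of the \emph{ideal} $\langle \PP_n\rangle$ alone. Since the sign-orthonormal basis $\PP_n$ and the monic basis $\wh\PP_n$ of $\CV_n^d$ differ by an invertible matrix, $\langle \PP_n\rangle = \langle \wh\PP_n\rangle$, so $\PP_n$ has $r_{n-1}$ common zeros if and only if $\wh\PP_n$ does, which by Corollary~\ref{cor:commJnGen} is equivalent to the commuting condition \eqref{eq:commJnGen} for the recurrence matrices of $\wh\PP_n$.

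Next I would substitute the explicit recurrence matrices of the monic basis (cf. \cite{BarrioPenaSauer10}): $A_{n,i} = L_{n,i}$ and $C_{n,i} = H_n L_{n-1,i}^\Tr H_{n-1}^{-1}$, where $H_n$ is the Schur complement of $M_{n-1}$ in $M_n$, nonsingular under our hypothesis. Then \eqref{eq:commJnGen}, with $n$ replaced by $n+1$, reads $L_{n-1,i} H_n L_{n-1,j}^\Tr H_{n-1}^{-1} = L_{n-1,j} H_n L_{n-1,i}^\Tr H_{n-1}^{-1}$, and cancelling the invertible factor $H_{n-1}^{-1}$ on the right reduces it to $L_{n-1,i} H_n L_{n-1,j}^\Tr = L_{n-1,j} H_n L_{n-1,i}^\Tr$ for all $i,j$. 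Entrywise, $\bigl(L_{n-1,i} H_n L_{n-1,j}^\Tr\bigr)_{\a,\b} = (H_n)_{\a+\epsilon_i,\,\b+\epsilon_j}$ for $|\a| = |\b| = n-1$, so this identity says precisely that $(H_n)_{\a+\epsilon_i,\,\b+\epsilon_j}$ is unchanged when $i$ and $j$ are swapped. That a Hankel $H_n$ has this symmetry is immediate, and the converse --- that the symmetry, for all $i,j$, forces $H_n$ to be Hankel --- is exactly \cite[Lemma~4.2]{X94}. Hence \eqref{eq:commJn} holds for every $n \ge 1$ if and only if every $H_n$, $n \ge 1$, is a Hankel matrix.

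The loop is closed by the last step: since $M_{n,n} = \bigl[\mu_{\a+\b} : |\a| = |\b| = n\bigr]$ is itself a Hankel matrix and $H_n = M_{n,n} - M_{n,n-1}^\Tr M_{n-1}^{-1} M_{n,n-1}$, the matrix $H_n$ is Hankel if and only if $M_{n,n-1}^\Tr M_{n-1}^{-1} M_{n,n-1}$ is, and concatenating all the equivalences gives the theorem. The point I expect to need the most care is the basis-independence reduction in the first paragraph --- that \eqref{eq:commJn} for the sign-orthonormal $\PP_n$ and \eqref{eq:commJnGen} for the monic $\wh\PP_n$ amount to the same condition --- because the two recurrences are written in genuinely different normalizations (one carrying the signature matrices $S_n$ together with the symmetry $C_{n,i} = A_{n-1,i}^\Tr$, the other not), so relating their coefficient matrices directly is awkward; the clean bridge is the coordinate-free statement that the degree-$n$ orthogonal polynomials cut out $\dim\Pi_{n-1}^d$ common zeros, which Theorem~\ref{thm:commJn}, Lemma~\ref{lem:HbasComm} and Corollary~\ref{cor:commJnGen} jointly provide. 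Everything else is bookkeeping with Schur complements.
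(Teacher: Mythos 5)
Your proof is correct and follows essentially the same route as the paper's: reduce, via the preceding characterization theorem, to the single condition \eqref{eq:commJn}, transfer it to the monic basis, substitute the explicit recurrence matrices $A_{n,i}=L_{n,i}$ and $C_{n,i}=H_nL_{n-1,i}^\Tr H_{n-1}^{-1}$, invoke \cite[Lemma~4.2]{X94} to conclude that the Schur complement $H_n$ is Hankel, and subtract off the Hankel block $M_{n,n}$. Your explicit justification of the bridge between \eqref{eq:commJn} for the sign-orthonormal basis and \eqref{eq:commJnGen} for the monic basis --- via the basis-independent statement that both characterize $\langle\PP_n\rangle=\langle\wh\PP_n\rangle$ having $r_{n-1}$ common zeros --- is a step the paper leaves implicit, and it is handled correctly.
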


\end{document}